\newtheorem{example}{Example}[section]
\newtheorem{remark}{Remark}[section]
\definecolor{darkred}{rgb}{0.85,0,0}
\definecolor{green}{rgb}{0,0.7,0}
\newcommand{\al}{\alpha}
\def\II{(\Omega)}
\def\R{{\mathbb R}}
\def\d{{\mathrm d}}
\begin{document}

\title{Numerical analysis of nonlinear subdiffusion equations \thanks{The work of B. Jin is partially supported by UK EPSRC grant EP/M025160/1. 
The work of B. Li is partially supported by a grant from the Research Grants Council of the Hong Kong Special Administrative Region (Project No. 15300817). 
The work of Z. Zhou is partially supported by the AFOSR MURI center for Material Failure Prediction through peridynamics and the ARO MURI Grant W911NF-15-1-0562.} }
\author{Bangti Jin\thanks{Department of Computer Science, University College London, Gower Street, London, WC1E 6BT, UK
(\texttt{b.jin@ucl.ac.uk, bangti.jin@gmail.com})}
\and Buyang Li\thanks{Department of Applied Mathematics, The Hong Kong Polytechnic University, Kowloon, Hong Kong. 
(\texttt{buyang.li@polyu.edu.hk, libuyang@gmail.com})} 
\and Zhi Zhou\thanks{Department of Applied Mathematics, The Hong Kong Polytechnic University, Kowloon, Hong Kong.
(\texttt{zhizhou@polyu.edu.hk, zhizhou0125@gmail.com})}}

\date{\today}

\maketitle
\begin{abstract}
We present a general framework for the rigorous numerical analysis of time-fractional nonlinear parabolic partial differential equations, with a fractional derivative of order $\alpha\in(0,1)$ in time. It relies on three technical tools:
a fractional version of the discrete Gr\"onwall-type inequality, discrete maximal regularity, and regularity theory of nonlinear equations. We establish a general criterion for showing the fractional discrete Gr\"onwall inequality, and verify it for the L1 scheme and convolution quadrature generated by BDFs. Further, we provide a complete solution theory, e.g., existence, uniqueness and regularity, for a time-fractional diffusion equation with a Lipschitz nonlinear source term. Together with the known results of discrete maximal regularity, we derive pointwise $L^2(\Omega)$ norm error estimates for semidiscrete Galerkin finite element solutions and fully discrete solutions, which are of order $O(h^2)$ (up to a logarithmic factor) and $O(\tau^\alpha)$, respectively, without any extra
regularity assumption on the solution or compatibility condition on the problem data. The sharpness of the convergence rates is supported by the numerical experiments. \smallskip

{\bf Keywords:} nonlinear fractional diffusion equation, discrete fractional Gr\"onwall inequality, L1 scheme, convolution quadrature, error estimate
\end{abstract}

\section{\bf Introduction}\label{Se:intr}

Time-fractional parabolic partial differential equations (PDEs)
have been very popular for modeling anomalously slow transport processes in the past two decades.
These models are commonly referred to as fractional diffusion or subdiffusion. At a microscopic
level, the underlying stochastic process is continuous time random walk \cite{MetzlerKlafter:2000}. So far they have been
successfully applied in a broad range of diversified research areas, e.g., thermal diffusion in
fractal domains \cite{Nigmatulin:1986}, flow in highly heterogeneous aquifer \cite{Berkowitz:2002}
and single-molecular protein dynamics \cite{Kou:2008}, just to name a few.
Hence, the rigorous numerical analysis of such problems is of great practical
importance. For the linear problem, various efficient time stepping schemes have been
proposed, which include mainly two classes: L1 type schemes and convolution quadrature (CQ).

L1 type schemes approximate the fractional derivative
by replacing the integrand with its piecewise polynomial interpolation \cite{LanglandsHenry:2005,LinXu:2007,SunWu:2006,Alikhanov:2015}
and thus generalize the classical finite difference method.
The piecewise linear case has a local truncation error $O(\tau^{2-\alpha})$ for sufficiently smooth solution, where $\tau$ denotes the time step size. See also \cite{McLeanMustapha:2015,MustaphaAbdallahFurati:2014}
for the discontinuous Galerkin method. CQ is a flexible framework introduced by Lubich
\cite{Lubich:1986,Lubich:1988} for constructing high-order time discretization methods for approximating
fractional derivatives. It approximates the fractional derivative in the Laplace
domain and automatically inherits the stability property of general linear multistep methods. See
\cite{CuestaLubichPalencia:2006,Yuste:2006,ZengLiLiuTurner:2015,JinLazarovZhou:SISC2016} for CQ type
schemes. Optimal error estimates have been derived for both spatially semidiscrete and fully discrete schemes,
including problems with nonsmooth data \cite{CuestaLubichPalencia:2006,JinLazarovZhou:SIAM2013,McLeanMustapha:2015,JinLazarovZhou:SISC2016}.

However, up to now, there has been very few work on the rigorous numerical analysis of nonlinear time fractional
diffusion equations.  In this paper, we present a general framework for analyzing discretization errors
of nonlinear problems. The error of the numerical solution can be split into a
linear part and a nonlinear part. While the linear part has been carefully studied, the analysis of the
nonlinear part requires different mathematical machineries, in order to derive sharp error estimates.
Besides regularity estimates for the nonlinear problem, it requires
discrete maximal $\ell^p$ regularity, and a fractional version of the discrete
Gr\"onwall's inequality for time stepping schemes. The former gives a bound on the
discrete fractional derivative due to the nonlinear part, whereas the latter allows combining the
nonlinear part with the linear part to obtain a global error estimate.

To the best of our knowledge, a fractional version of discrete Gr\"onwall's inequality for
time stepping schemes is still unavailable in the literature. We shall establish such
discrete Gr\"onwall's inequality for both L1 scheme and CQs generated
by backward difference formulas (BDFs) up to order 6 in Theorem \ref{L1-CQ-gronwall}. Further,
in Theorem \ref{Other-Scheme-A2}, we present a general criterion under which the fractional
discrete Gr\"onwall's inequality holds. 

To illustrate the main idea of {this framework,}
we consider the following nonlinear problem in a bounded convex polygonal domain $\Omega\subset\R^d$, $d\ge 1$:
\begin{align}\label{nonlinear-PDE}
\left\{
\begin{aligned}
&\partial_t^\alpha u-\Delta u=f(u) &&\mbox{in}\,\,\,\Omega\times(0,T),\\
&u=0 &&\mbox{on}\,\,\partial\Omega\times(0,T),\\
&u=u_0 &&\mbox{in}\,\, \Omega\times\{0\},
\end{aligned}
\right.
\end{align}
where $u_0\in H_0^1(\Omega)\cap H^2(\Omega)$ is a given function
and $f:\mathbb{R}\rightarrow \mathbb{R}$ is a Lipschitz continuous function, i.e.,
$|f(s)-f(t)|\leq L|s-t|$ for all $s,t\in\mathbb{R}$,
and $\partial_t^\alpha u$ denotes the Caputo fractional derivative of order $\alpha\in(0,1)$ in time \cite[pp.\,91]{KilbasSrivastavaTrujillo:2006}
\begin{equation}\label{McT}
\partial_t^\alpha u(t):= \frac{1}{\Gamma(1-\alpha)} \int_0^t(t-s)^{-\alpha}\frac{\d}{\d s}u(s)\, \d s ,\quad \mbox{with }\Gamma(z):=\int_0^\infty s^{z-1}e^{-s}\d s.
\end{equation}

Let $S_h\subset H_0^1(\Omega)$ be the continuous piecewise linear finite element space
subject to a quasi-uniform shape regular triangulation of $\Omega$, with a mesh size $h$, and let $\Delta_h:S_h\rightarrow S_h$ denote
the Galerkin finite element approximation of the Dirichlet Laplacian $\Delta$, defined by
$$(\Delta_hw_h,v_h):=-(\nabla w_h,\nabla v_h),\quad \forall\, w_h,v_h\in S_h .$$
Let $0=t_0<t_1<\ldots<t_N=T$ be a uniform partition of the time interval $[0,T]$, with grid
points $t_n=n\tau$ and step size $\tau=T/N$. Upon rewriting the Caputo derivative
$\partial_t^\alpha u$ as a Riemann-Liouville one \cite[pp. 91]{KilbasSrivastavaTrujillo:2006}, we consider a linearized time-stepping scheme:
for the given initial value $u_h^0=R_hu_0$ (Ritz projection of $u_0$), find $u_h^n$, $n=1,2,\ldots, N$, such that
\begin{align}\label{TD-scheme}
\begin{aligned}
&\bar\partial_\tau^\alpha (u_h^n-u_h^0)
-\Delta_h u_h^n=P_hf(u_h^{n-1}) ,
\end{aligned}
\end{align}
where $P_h$ denotes the $L^2$ projection onto the finite element space $S_h$, and $\bar\partial_\tau^\alpha
u_h^n$ denotes either the CQ generated by the backward Euler method or L1 scheme; see
\eqref{eqn:BE} and \eqref{generating-L1} below. These methods are popular for discretizing the fractional derivative in time.

After proving the fractional discrete Gr\"onwall's inequality in Section \ref{sec:Gronwall} and
the regularity estimate in Section \ref{sec:regularity}, we present an error analysis
for the fully discrete scheme \eqref{TD-scheme} in Section \ref{sec:error}. By introducing an intermediate spatially semidiscrete Galerkin problem
\begin{align}\label{nonlinear-FEM}
\partial_t^\alpha u_h(t)-\Delta_h u_h(t)=P_hf(u_h(t)) \quad \forall t\in (0,T],
\end{align}
we split the error
into two parts: $u(t_n)-u_h^n=(u(t_n)-u_h(t_n))+(u_h(t_n)-u_h^n)$, and derive the
following error estimates for each component in Theorems \ref{THM:Error-1} and \ref{THM:Error-2}:
\begin{align*}
 \max_{0\le t\le T}\|u(t)-u_h(t)\|_{L^2(\Omega)}\le c\ell_h^2 h^2
 \quad\mbox{and}\quad
 \max_{1\le n\le N}  \|u_h(t_n)-u_h^n\|_{L^2(\Omega)}\le c\tau^\alpha ,
\end{align*}
where $\ell_h=\log(2+1/h)$. 
These estimates are sharp with respect to the regularity of the solution in Theorem \ref{THM:Reg} (up to a logarithmic factor $\ell_h$), and are confirmed by the numerical experiments in Section \ref{sec:numerics}. Besides, we show how to simplify the analysis of nonlinear problems by applying the fractional-type discrete maximal $\ell^p$-regularity established in \cite{JLZ}, an extension of the discrete maximal $\ell^p$-regularity of standard parabolic equations \cite{Kemmochi:2015,KovacsLiLubich2016,LeykekhmanVexler:2017}, which has been applied to numerical analysis of nonlinear parabolic equations in the literature \cite{AkrivisLi2017,AkrivisLiLubich2016,KLC2016}.

Last we mention the interesting works \cite{CuestaLubichPalencia:2006,MustaphaMustapha:2010} on
integro-differential equations, where a Riemann-Liouville fractional integral operator appears in front of the Laplacian.
These models are closely related to \eqref{nonlinear-PDE}, but have different smoothing properties. Cuesta et
al \cite{CuestaLubichPalencia:2006} proposed the CQ generated by the second-order BDF for a semilinear
problem, and proved an $O(\tau^2)$ error bound of the temporal error. In \cite{MustaphaMustapha:2010},
a Crank-Nicolson type method for a semilinear problem with variable time step size was studied. In these
works, a variant of the discrete Gr\"onwall's inequality due to Chen et al
\cite{ChenThomeeWalbin:1992} plays a crucial role,
which differs substantially from the discrete Gr\"onwall's inequality we shall establish below.

Throughout this paper, the notation
$c$ denotes a generic constant, which may vary at different occurrences, but it
is always independent of the mesh size $h$ and time step size $\tau$.

\section{Discrete Gr\"onwall's inequality for time-fractional diffusion}\label{sec:Gronwall}

In this section, we establish
a fractional version of Gr\"onwall's inequality and its discrete
analogue for time stepping schemes. These inequalities are crucial in analyzing numerical
schemes for nonlinear subdiffusion equations, and are of independent interest.

\subsection{Continuous Gr\"onwall's inequality}
We begin with the continuous Gr\"onwall's inequality for fractional differential equations
in a general Banach space setting.

\begin{theorem}[Fractional Gr\"onwall's inequality]\label{Frac-Gronwall}
Let $X$ be any given Banach space.
For $\alpha\in(0,1)$ and $p\in (1/\alpha,\infty)$,
if a function $u\in C([0,T];X)$ satisfies $\partial_t^\alpha u\in L^p(0,T;X)$, $u(0)=0$ and
\begin{align}
\|\partial_t^\alpha u\|_{L^p(0,s;X)}
\le \kappa \|u\|_{L^p(0,s;X)}+\sigma , \quad \forall\, s\in(0,T],
\label{Gronwall-continuous-cond}
\end{align}
for some positive constants $ \kappa$ and $\sigma$, then
\begin{align}\label{Gronwall-continuous-concl}
\|u\|_{C([0,T];X)}+\|\partial_t^\alpha u\|_{L^p(0,T;X)}\le c\sigma,
\end{align}
where the constant $c$ is independent of $\sigma$, $u$ and $X$,
but may depend on $\alpha$, $p$, $\kappa$ and $T$.
\end{theorem}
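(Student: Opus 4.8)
The plan is to reduce everything to a self-improving bootstrap argument built on the fundamental identity $u(t) = \partial_t^{-\alpha}(\partial_t^\alpha u)(t)$, valid here since $u(0)=0$, where $\partial_t^{-\alpha}$ is the Riemann--Liouville fractional integral of order $\alpha$. Writing $g := \partial_t^\alpha u \in L^p(0,T;X)$, we have the explicit representation
\begin{equation*}
u(t) = \frac{1}{\Gamma(\alpha)}\int_0^t (t-s)^{\alpha-1} g(s)\,\d s,
\end{equation*}
so $u$ is the convolution of $g$ with the kernel $k_\alpha(t) = t^{\alpha-1}/\Gamma(\alpha)$. The hypothesis $p > 1/\alpha$ is exactly what makes $k_\alpha \in L^{p'}(0,T)$ locally, where $p'$ is the conjugate exponent (indeed $(\alpha-1)p' > -1 \iff p' < 1/(1-\alpha) \iff p > 1/\alpha$), which by Young's (or H\"older's) inequality gives a bound on $\|u\|_{C([0,s];X)}$ and, more importantly, a bound of the form $\|u\|_{L^p(0,s;X)} \le \phi(s)\,\|g\|_{L^p(0,s;X)}$ where $\phi(s)\to 0$ as $s\to 0^+$.

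First I would make this quantitative: by a careful application of the convolution estimate on a subinterval, $\|u\|_{L^p(0,\delta;X)} \le C_\alpha \delta^\alpha \|g\|_{L^p(0,\delta;X)}$ for $\delta \le T$ (the power $\delta^\alpha$ comes from the scaling of the kernel). Combined with the hypothesis \eqref{Gronwall-continuous-cond} applied at $s=\delta$, this yields $\|g\|_{L^p(0,\delta;X)} \le \kappa C_\alpha \delta^\alpha \|g\|_{L^p(0,\delta;X)} + \sigma$, so choosing $\delta_0$ small enough that $\kappa C_\alpha \delta_0^\alpha \le 1/2$ gives $\|g\|_{L^p(0,\delta_0;X)} \le 2\sigma$, and then the convolution bound gives $\|u\|_{C([0,\delta_0];X)} \le c\sigma$ as well. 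Note $\delta_0$ depends only on $\alpha,p,\kappa$ (not on $T$, $u$, or $X$).

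Next I would propagate this to the whole interval $[0,T]$ by iterating over the finitely many subintervals $[t_{k}, t_{k+1}]$ of length $\delta_0$ (with $m := \lceil T/\delta_0\rceil$ steps). The subtlety is that on the $(k+1)$-st interval the ``memory'' of the fractional integral over $[0,t_k]$ enters as an additional forcing term: splitting $u(t) = \frac{1}{\Gamma(\alpha)}\int_0^{t_k}(t-s)^{\alpha-1}g(s)\,\d s + \frac{1}{\Gamma(\alpha)}\int_{t_k}^t (t-s)^{\alpha-1}g(s)\,\d s$, the first term is controlled in $C$ and $L^p$ norm on $[t_k, t_{k+1}]$ by $c\,\|g\|_{L^p(0,t_k;X)}$ using that the kernel is bounded away from the diagonal, while the second term is handled exactly as in the base case. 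This turns the hypothesis into a recursive inequality of the form $\|g\|_{L^p(0,t_{k+1};X)} \le \tfrac12 \|g\|_{L^p(t_k,t_{k+1};X)} + c\,\|g\|_{L^p(0,t_k;X)} + \sigma$, from which $\|g\|_{L^p(0,t_{k+1};X)} \le c(\|g\|_{L^p(0,t_k;X)} + \sigma)$ and hence, by induction over the $m$ steps, $\|g\|_{L^p(0,T;X)} \le c\sigma$ with $c = c(\alpha,p,\kappa,T)$; the bound on $\|u\|_{C([0,T];X)}$ then follows from the convolution representation.

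The main obstacle is the bookkeeping in the induction step: making sure the memory-term constant and the contraction constant combine so that the recursion closes with a constant depending on $T$ only through the number of steps $m$, and verifying the convolution/H\"older estimates uniformly in the (infinite-dimensional, arbitrary) Banach space $X$ — this last point is fine since all the estimates are scalar estimates on $\|g(\cdot)\|_X$ and Bochner integration commutes with the norm in the needed inequalities, but it should be stated carefully. A secondary point worth checking is the justification of the identity $u = \partial_t^{-\alpha}\partial_t^\alpha u$ under the stated regularity ($u \in C([0,T];X)$ with $\partial_t^\alpha u \in L^p$), which follows from the definition \eqref{McT} together with $u(0)=0$ and Fubini's theorem.
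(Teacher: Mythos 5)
Your proposal is correct, but it closes the estimate by a genuinely different mechanism than the paper. Both arguments start from the same representation $u(t)=\frac{1}{\Gamma(\alpha)}\int_0^t(t-\xi)^{\alpha-1}\partial_\xi^\alpha u(\xi)\,\d\xi$ (valid since $u(0)=0$) and both exploit $p>1/\alpha$ through H\"older's inequality. The paper then takes the supremum over $t\in(0,s)$ to get $\|u\|_{L^\infty(0,s;X)}\le c\kappa\|u\|_{L^p(0,s;X)}+c\sigma$, interpolates $\|u\|_{L^p}\le\epsilon\|u\|_{L^\infty}+c_\epsilon\|u\|_{L^1}$ to absorb the $L^\infty$ term, and lands on $\|u(s)\|_X\le c_\kappa\int_0^s\|u(\xi)\|_X\,\d\xi+c\sigma$, which is finished by the classical integral Gr\"onwall inequality. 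You instead absorb locally in time: the operator-norm bound $\|u\|_{L^p(0,\delta;X)}\le C_\alpha\delta^\alpha\|g\|_{L^p(0,\delta;X)}$ (Young's inequality with $\|k_\alpha\|_{L^1(0,\delta)}=\delta^\alpha/\Gamma(\alpha+1)$) makes the Volterra operator a contraction on a short interval, and you then iterate over $\lceil T/\delta_0\rceil$ subintervals, splitting off the memory term and controlling it by $c\|g\|_{L^p(0,t_k;X)}$ via H\"older. Your route avoids the $L^\infty$--$L^1$ interpolation and the appeal to the classical Gr\"onwall lemma at the cost of the subinterval bookkeeping; the paper's route avoids the memory-term splitting entirely. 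One small imprecision in your sketch: the kernel $(t-s)^{\alpha-1}$ is \emph{not} bounded away from the diagonal at the junction $s\uparrow t_k$, $t\downarrow t_k$, so the memory term must be estimated by H\"older using $(\alpha-1)p'>-1$ (which you have from $p>1/\alpha$) rather than by a pointwise kernel bound; with that fix the induction closes as you describe, and both proofs yield constants exponential in $T$.
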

\begin{proof}
Due to the zero initial condition $u(0)=0$, the Riemann--Liouville and Caputo fractional
derivatives coincide. Hence, the function $u(t)$ can be expressed in terms of $ \partial_t^\alpha u$
(cf. \cite[pp. 96, Lemma 2.22]{KilbasSrivastavaTrujillo:2006} and
\cite[pp. 74, Lemma 2.5]{KilbasSrivastavaTrujillo:2006}):
$u(t)=\frac{1}{\Gamma(\alpha)} \int_0^t (t-\xi)^{\alpha-1} \partial_\xi^\alpha u(\xi) \,\d \xi.$
Since $p>1/\alpha$, H\"older's inequality implies
\begin{equation}\label{eqn:Hardy}
\begin{split}
   \|u(t)\|_{X}
   &\le c \bigg(\int_0^t (t-\xi) ^{\frac{(\al-1)p}{p-1}}\,\d \xi \bigg)^{\frac{p-1}{p}} \| \partial_\xi^\alpha  u\|_{L^p(0,t;X) }   \le c \| \partial_\xi^\alpha u \|_{L^p(0,t;X) }.
\end{split}
\end{equation}
Upon taking the supremum with respect to $t\in(0,s)$ for any $s\in(0,T]$ in \eqref{eqn:Hardy}, we obtain
\begin{equation*}
\begin{aligned}
  \|u\|_{L^\infty(0,s;X)}
  &\le c\|\partial_\xi^\alpha u\|_{L^p(0,s;X)}
  \le c \kappa \|u\|_{L^p(0,s;X)}+c\sigma\\
  &\le \epsilon \kappa \|u\|_{L^\infty(0,s;X)}+c_\epsilon \kappa \|u\|_{L^1(0,s;X)}+c\sigma ,
  \quad\forall\, s\in[0,T] ,
\end{aligned}
\end{equation*}
where $\epsilon>0$ can be arbitrary. By choosing $\epsilon=\frac{1}{2\kappa}$, the $L^\infty$-norm
on the right-hand side can be eliminated by the left-hand side, and the last inequality reduces to
\begin{equation*}
  \|u\|_{L^\infty(0,s;X)}
   \le c_\kappa \|u\|_{L^1(0,s;X)}+c\sigma,
\quad\forall\, s\in[0,T].
\end{equation*}
That is, we have
 $ \|u(s)\|_X \le c_\kappa \int_0^s\|u(\xi)\|_X \d \xi+c\sigma$
for $s\in(0,T].$ Now the standard Gr\"onwall's inequality yields
\begin{equation*}
\max_{s\in[0,T]}\|u(s)\|_X \le e^{c_\kappa T}c\sigma .
\end{equation*}
Substituting it into \eqref{Gronwall-continuous-cond} yields \eqref{Gronwall-continuous-concl}.
The proof of Theorem \ref{Frac-Gronwall} is complete.
\end{proof}

\subsection{Discrete Gr\"onwall's inequality}

In this part, we establish the discrete analogue of the Gr\"onwall's inequality in Theorem
\ref{Frac-Gronwall} for time stepping schemes that
approximate the fractional derivative $\partial^\alpha_tv(t_n)$ by a discrete
convolution:
\begin{align}\label{conv-approx}
\bar\partial^\alpha_\tau v^n:=\frac{1}{\tau^\alpha}\sum_{j=0}^{n}K_{n-j} v^j ,
\qquad n=0,1,2,\dots
\end{align}
where $v^n$ is an approximation of $v(t_n)$, and $K_j$, $j=0,1,2,\ldots$, are the weights
independent of the time step size $\tau$. Throughout, we denote by $K(\zeta)$ the generating
function of the discrete fractional derivative $\bar\partial_\tau^\alpha$, defined by
\begin{equation}
K(\zeta):=\frac{1}{\tau^\alpha}\sum_{j=0}^\infty K_j \zeta^j ,
\end{equation}
which is an analytic function in the (open) unit disk ${\mathbb D}:=\{z\in{\mathbb C}:
|z|<1\}$, continuously differentiable up to the boundary $\partial{\mathbb D}\backslash \{\pm 1\}$, except for the two points $\pm 1$.
Then we have
\begin{align}\label{generating-frac}
&K(\zeta)\sum_{n=0}^\infty v^{n} \zeta^n
=\sum_{n=0}^\infty (\bar\partial^\alpha_\tau v^n)\zeta^n .
\end{align}

\begin{example}
The CQ generated by the $k^{\rm th}$-order BDF
\cite{Lubich:1986,CuestaLubichPalencia:2006} is given by \eqref{conv-approx},
where the coefficients $K_j$, $j=0,1,\dots$, are determined by the power series expansion
\begin{equation}\label{BDF-coefficient}
\bigg(\sum_{j=1}^k \frac {1}{j} (1-\zeta)^j \bigg)^{\alpha}= \sum_{j=0}^\infty K_j \zeta^j .
\end{equation}
The special case $k=1$, i.e., the backward Euler CQ, is very popular and commonly known
as Gr\"{u}nwald--Letnikov approximation, and the coefficients $K_j$,
$j=0,1,2,\ldots$, are  given by
\begin{equation}\label{eqn:BE}
(1-\zeta)^\alpha = \sum_{j=0}^\infty K_j\zeta^j .
\end{equation}
\end{example}

\begin{example}
The popular L1 scheme \cite{LinXu:2007} is also of the form
\eqref{conv-approx} with \cite[pp. 8]{JLZ}
\begin{equation}\label{generating-L1}
\frac{(1-\zeta)^2}{\zeta\Gamma(2-\alpha)}\mathrm{Li}_{\alpha-1}(\zeta)
=\sum_{j=0}^\infty K_j \zeta^j ,
\end{equation}
where $\mathrm{Li}_p(z)=\sum_{j=1}^\infty z^j/j^p$ is the polylogarithmic function, which
is well defined for $|z| < 1$ and can be analytically continued to the split
complex plane $\mathbb{C}\setminus [1,\infty)$ \cite{Flajolet:1999}.
\end{example}

Now we turn to the discrete Gr\"onwall's inequality.
For $1\le p\leq \infty$, we denote by $\ell^p(X)$ the space of sequences $v^n\in X$, $n=0,1,\dots$,
such that
$\|(v^n)_{n=0}^\infty\|_{\ell^p(X)}<\infty$, where
$$
\|(v^n)_{n=0}^\infty\|_{\ell^p(X)}:=
\left\{
\begin{aligned}
&\bigg(\sum_{n=0}^\infty\tau\|v^n\|_{X}^p\bigg)^{\frac{1}{p}}  &&\mbox{if}\,\,\, 1\le p<\infty,\\
&\sup_{n\ge 0}\|v^n\|_{X} &&\mbox{if}\,\,\, p=\infty .
\end{aligned}\right.
$$
For a finite sequence $v^n\in X$, $n=0,1,\dots,m$, we denote
$\|(v^n)_{n=0}^m\|_{\ell^p(X)}:=\|(v^n)_{n=0}^\infty\|_{\ell^p(X)}$,
by setting $v^n=0$ for $n>m$. The following theorem is a discrete
analogue of Theorem \ref{Frac-Gronwall} for the backward Euler CQ. It is foundational to
the proof of the discrete Gr\"onwall's inequalities for other time-stepping schemes.

\begin{theorem}[Discrete fractional Gr\"onwall's inequality: backward Euler]\label{Frac-Gronwall-Euler}
Let $X$ be any given Banach space, and let $\bar\partial_\tau^\alpha\,$ denote the backward Euler CQ
given by \eqref{conv-approx} and \eqref{eqn:BE}.
If $\alpha\in(0,1)$ and $p\in (1/\alpha,\infty)$, and a sequence
$v^n\in X$, $n=0,1,2,\dots$, with $v^0=0$, satisfies
\begin{align}\label{Gronwall-Euler-cond}
\|(\bar\partial_\tau^\alpha v^n)_{n=1}^m\|_{\ell^p(X)}
\le \kappa \|(v^n)_{n=1}^m\|_{\ell^p(X)}+\sigma ,
\quad\forall\, 0\le m\le N ,
\end{align}
for some positive constants $ \kappa$ and $\sigma$, then
there exists a $\tau_0>0$ such that for any $\tau<\tau_0$ there holds
\begin{align}\label{Gronwall-Euler-concl}
\|(v^n)_{n=1}^N\|_{\ell^\infty(X)}+\|(\bar\partial_\tau^\alpha v^n)_{n=1}^N\|_{\ell^p(X)}
\le c  \sigma ,
\end{align}
where the constants $c$ and $\tau_0$ are independent of $\sigma$, $\tau$, $N$, $X$ and $v^n$,
but may depend on $\alpha$, $p$, $\kappa$ and $T$.
\end{theorem}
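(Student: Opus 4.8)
The plan is to mimic the continuous proof of Theorem~\ref{Frac-Gronwall}, replacing the representation $u(t)=\frac{1}{\Gamma(\alpha)}\int_0^t(t-\xi)^{\alpha-1}\partial_\xi^\alpha u(\xi)\,\d\xi$ by its discrete analogue. Because $v^0=0$ and $\bar\partial_\tau^\alpha$ is the backward Euler CQ with generating function $K(\zeta)=\tau^{-\alpha}(1-\zeta)^\alpha$, the generating function of the \emph{inverse} operator is $\tau^\alpha(1-\zeta)^{-\alpha}=\sum_{j\ge 0}\tau^\alpha b_j\zeta^j$ with $b_j=\binom{\alpha+j-1}{j}\sim j^{\alpha-1}/\Gamma(\alpha)$. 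Hence, from \eqref{generating-frac}, one recovers $v^n=\tau^\alpha\sum_{j=0}^{n}b_{n-j}(\bar\partial_\tau^\alpha v^j)$, which is precisely the discrete Riemann--Liouville integral of order $\alpha$ applied to $(\bar\partial_\tau^\alpha v^j)$. The first step is to establish a discrete Hardy--Young-type inequality: for $p>1/\alpha$, the discrete convolution operator $(w^j)\mapsto\big(\tau\sum_{j=0}^{n}b_{n-j}w^j\big)$ maps $\ell^p(X)$ boundedly into $\ell^\infty(X)$, with norm uniform in $\tau$ and $N$. This follows from $b_j\lesssim (1+j)^{\alpha-1}$ together with H\"older's inequality, since $\sum_{j\ge 0}\tau\big((1+j)\tau\big)^{(\alpha-1)p/(p-1)}$ is a Riemann sum for a convergent integral on $(0,T]$ (here $p>1/\alpha$ ensures $(\alpha-1)p/(p-1)>-1$).

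With that in hand, the argument proceeds exactly as in the continuous case. From $v^n=\tau^\alpha\sum_{j=0}^n b_{n-j}(\bar\partial_\tau^\alpha v^j)$ and the discrete Hardy inequality we get $\|(v^n)_{n=1}^m\|_{\ell^\infty(X)}\le c\,\|(\bar\partial_\tau^\alpha v^n)_{n=1}^m\|_{\ell^p(X)}$ for every $m\le N$. Combining this with the hypothesis \eqref{Gronwall-Euler-cond} gives
\begin{align*}
\|(v^n)_{n=1}^m\|_{\ell^\infty(X)}\le c\kappa\|(v^n)_{n=1}^m\|_{\ell^p(X)}+c\sigma .
\end{align*}
Next I would interpolate: $\|(v^n)_{n=1}^m\|_{\ell^p(X)}^p=\sum_{n=1}^m\tau\|v^n\|_X^p\le \|(v^n)_{n=1}^m\|_{\ell^\infty(X)}^{p-1}\sum_{n=1}^m\tau\|v^n\|_X$, so by Young's inequality, for any $\epsilon>0$,
\begin{align*}
c\kappa\|(v^n)_{n=1}^m\|_{\ell^p(X)}\le \epsilon\|(v^n)_{n=1}^m\|_{\ell^\infty(X)}+c_\epsilon\sum_{n=1}^m\tau\|v^n\|_X .
\end{align*}
Choosing $\epsilon=\tfrac12$ absorbs the $\ell^\infty$ term on the left, yielding $\max_{1\le n\le m}\|v^n\|_X\le c_\kappa\sum_{n=1}^m\tau\|v^n\|_X+c\sigma$ for all $m\le N$. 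The discrete (classical) Gr\"onwall inequality then gives $\max_{1\le n\le N}\|v^n\|_X\le e^{c_\kappa T}c\sigma$, provided $\tau$ is small enough that $c_\kappa\tau<1$ — this is where the restriction $\tau<\tau_0$ enters. Substituting this bound back into \eqref{Gronwall-Euler-cond} controls $\|(\bar\partial_\tau^\alpha v^n)_{n=1}^N\|_{\ell^p(X)}\le\kappa T^{1/p}\max_n\|v^n\|_X+\sigma\le c\sigma$, which completes the proof of \eqref{Gronwall-Euler-concl}.

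The main obstacle is the discrete Hardy inequality with a $\tau$-uniform constant: one must control $b_j=\binom{\alpha+j-1}{j}$ sharply, both the small-$j$ behavior and the asymptotics $b_j=j^{\alpha-1}/\Gamma(\alpha)+O(j^{\alpha-2})$, and verify that the resulting Riemann sums are bounded independently of $\tau$ and $N$ (not merely convergent for each fixed $\tau$). The endpoint structure — the weight $(1+j)^{\alpha-1}$ is borderline summable only because $p>1/\alpha$ — means the estimate is genuinely delicate and cannot tolerate a factor growing in $N$; everything downstream (the Young interpolation, the absorption, the discrete Gr\"onwall step) is then routine and parallels the continuous proof line by line.
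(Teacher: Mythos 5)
Your proposal is correct and follows essentially the same route as the paper: invert the backward Euler CQ via the expansion $(1-\zeta)^{-\alpha}=\sum_j b_j\zeta^j$ with $b_j\le(1+j)^{\alpha-1}$, prove a $\tau$-uniform discrete Hardy inequality $\ell^p(X)\to\ell^\infty(X)$ by H\"older (the paper's Lemma \ref{Tech-Lemma-Gronwall}, which bounds the Riemann sum by the same convergent integral using $p>1/\alpha$), then interpolate $\ell^p$ between $\ell^\infty$ and $\ell^1$, absorb, and finish with the classical discrete Gr\"onwall inequality. The coefficient bound you flag as the main obstacle is handled in the paper by the elementary estimate $\ln\prod_{j=1}^n(1+\frac{\alpha-1}{j})\le(\alpha-1)\ln(n+1)$, so no sharp asymptotics are needed.
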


To prove Theorem \ref{Frac-Gronwall-Euler}, we need a technical lemma, which gives a discrete
analogue of the Hardy type inequality \eqref{eqn:Hardy}.
\begin{lemma}[Discrete Hardy type inequality]\label{Tech-Lemma-Gronwall}
Let $\alpha\in(0,1)$, and $X$ be any given Banach space. If $v^n\in X$ and $w^n\in X$, $n=0,1,2,\dots,$ satisfy
\begin{align} \label{vn-interms-wn}
&\bigg(\frac{1-\zeta}{\tau}\bigg)^\alpha \sum_{n=0}^\infty v^{n} \zeta^n
=\sum_{n=0}^\infty w^n\zeta^n ,
\end{align}
in the sense that both sides are analytic in ${\mathbb D}$, then for $p\in (1/\alpha,\infty)$, there holds
\begin{align}\label{en-intermsof-En}
\|(v^{n})_{n=0}^m\|_{\ell^\infty(X)}
\le c \|(w^n)_{n=0}^m\|_{\ell^{p}(X)} ,
\quad\ 0\le m\le N,
\end{align}
where the constant $c$ is independent of $\tau$, $m$, $N$ and $X$, but may depend on
$\alpha$, $p$ and $T$.
\end{lemma}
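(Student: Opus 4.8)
The plan is to convert the generating-function identity \eqref{vn-interms-wn} into an explicit discrete convolution and then apply a discrete H\"older inequality, mirroring the continuous argument behind \eqref{eqn:Hardy}. Since $(1-\zeta)^{-\alpha}=\sum_{j=0}^\infty b_j\zeta^j$ is analytic on $\mathbb D$ with coefficients $b_j=\Gamma(j+\alpha)/(\Gamma(\alpha)\,j!)\ge 0$, multiplying \eqref{vn-interms-wn} through by $\tau^\alpha(1-\zeta)^{-\alpha}$ and comparing power-series coefficients (both sides being analytic in $\mathbb D$) yields
$$v^n=\tau^\alpha\sum_{j=0}^n b_{n-j}\,w^j,\qquad n=0,1,2,\dots,$$
which is the discrete counterpart of the Riemann--Liouville representation $u(t)=\frac1{\Gamma(\alpha)}\int_0^t(t-\xi)^{\alpha-1}\partial_\xi^\alpha u(\xi)\,\d\xi$ used in the proof of Theorem~\ref{Frac-Gronwall}.

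Next I would estimate $\|v^n\|_X$ pointwise in $n$. Taking norms and using $b_j\ge 0$,
$$\|v^n\|_X\le\tau^\alpha\sum_{j=0}^n b_{n-j}\|w^j\|_X\le\tau^\alpha\Big(\sum_{i=0}^n b_i^{\,q}\Big)^{1/q}\Big(\sum_{j=0}^n\|w^j\|_X^p\Big)^{1/p},$$
where $q=p/(p-1)$ is the conjugate exponent of $p$. For $n\le m$ the second factor is bounded by $\tau^{-1/p}\|(w^j)_{j=0}^m\|_{\ell^p(X)}$. For the first factor, the classical asymptotics $b_j\asymp j^{\alpha-1}/\Gamma(\alpha)$ give $b_j\le c(1+j)^{\alpha-1}$, and the hypothesis $p>1/\alpha$ is exactly what forces $q(1-\alpha)<1$, i.e. the exponent $(\alpha-1)q$ lies in $(-1,0)$; an integral/monotonicity comparison then gives $\sum_{i=0}^n b_i^{\,q}\le c\sum_{i=0}^n(1+i)^{(\alpha-1)q}\le c(1+n)^{(\alpha-1)q+1}$, hence $\big(\sum_{i=0}^n b_i^{\,q}\big)^{1/q}\le c(1+n)^{\alpha-1/p}$. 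Combining these and writing $(1+n)^{\alpha-1/p}=((1+n)\tau)^{\alpha-1/p}\,\tau^{1/p-\alpha}$ with $(1+n)\tau\le (1+N)\tau=T+\tau\le 2T$ (recall $\alpha-1/p>0$), all powers of $\tau$ cancel, $\tau^\alpha\cdot\tau^{-1/p}\cdot\tau^{1/p-\alpha}=1$, so $\|v^n\|_X\le c(T+\tau)^{\alpha-1/p}\|(w^j)_{j=0}^m\|_{\ell^p(X)}\le c\,\|(w^j)_{j=0}^m\|_{\ell^p(X)}$ uniformly for $0\le n\le m\le N$; taking the supremum over $n$ gives \eqref{en-intermsof-En}.

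The only genuinely technical points are (i) establishing the coefficient asymptotics for $(1-\zeta)^{-\alpha}$ and the resulting bound on the partial sums $\sum_{i=0}^n b_i^{\,q}$ (a routine but careful integral comparison, valid precisely because $(\alpha-1)q>-1$), and (ii) bookkeeping the $\tau$-powers so that they cancel and the final constant is independent of $\tau$, $m$, $N$ and $X$. I expect (ii), i.e. making the cancellation $\tau^\alpha\cdot\tau^{-1/p}\cdot\tau^{1/p-\alpha}=1$ transparent together with the bound $(1+n)\tau\le 2T$, to be the main thing to get exactly right; the rest is a direct discrete H\"older estimate, and the role of $p>1/\alpha$ here is the discrete analogue of the same requirement in \eqref{eqn:Hardy}.
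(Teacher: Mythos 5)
Your proposal is correct and follows essentially the same route as the paper's proof: expand $(1-\zeta)^{-\alpha}$, write $v^n$ as the discrete convolution $\tau^\alpha\sum_{j=0}^n\phi^{n-j}w^j$, apply H\"older with the conjugate exponent, bound the partial sums of $(1+j)^{(\alpha-1)p'}$ by an integral comparison using $p>1/\alpha$, and cancel the $\tau$-powers via $(n+1)\tau\le 2T$. The only cosmetic difference is that the paper proves the coefficient bound $\phi^n\le(n+1)^{\alpha-1}$ elementarily from the product formula and $\ln(1+x)\le x$ rather than invoking the classical asymptotics.
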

\begin{proof}
We define $\phi^n$, $n=0,1,\dots$, to be the coefficients of the power series expansion
$$
(1-\zeta)^{-\alpha}
=\sum_{n=0}^\infty \phi^n\zeta^n .
$$
Then direct calculations yield $\phi^0=1$ and $\phi^n= \prod_{j=1}^n\left(1+\frac{\alpha-1}{j}\right)$
for $n\ge 1$. By the trivial inequality $\ln (1+x)\leq x$ for $x> -1$, we have
\begin{equation*}
\begin{aligned}
\ln\phi^n=  \sum_{j=1}^n \ln\bigg(1+\frac{\alpha-1}{j}\bigg)& \leq (\alpha-1)\sum_{j=1}^nj^{-1}
\le  (\alpha-1)\ln (n+1) .
  \end{aligned}
\end{equation*}
That is, $\phi^n \le (n+1)^{\alpha-1}$ for $n\ge 0 .$
It follows from \eqref{vn-interms-wn} that
\begin{align*}
\sum_{n=0}^\infty v^{n} \zeta^n
=\bigg(\frac{\tau}{1-\zeta}\bigg)^\alpha \sum_{n=0}^\infty w^n \zeta^n
=\tau^\alpha\Big( \sum_{n=0}^\infty \phi^{n} \zeta^n \Big)\Big(\sum_{n=0}^\infty w^n\zeta^n\Big).
\end{align*}
With $p^\prime=\frac{p}{p-1}$, the last identity yields
\begin{align}\label{expr-en-0}
\begin{aligned}
\|v^n\|_X
=\bigg\|\tau^\alpha\sum_{j=0}^n\phi^{n-j} w^j\bigg\|_X
&\le \tau^\alpha\bigg(\sum_{j=0}^n|\phi^{n-j}|^{p'}\bigg)^{\frac{1}{p'}}\bigg(\sum_{j=0}^n\|w^j\|_X^p\bigg)^{\frac{1}{p}} \\
&\le \tau^{\alpha-1/p} \bigg(\sum_{j=0}^n\frac{1}{(j+1)^{p'(1-\alpha)}}\bigg)^{\frac{1}{p'}}\|(w^j)_{j=0}^n\|_{\ell^p(X)}.
\end{aligned}
\end{align}
If $p>1/\alpha$, then $0<p'(1-\alpha)<1$ and so
$$
\sum_{j=0}^n\frac{1}{(j+1)^{p'(1-\alpha)}}\le
\int_0^{n+1} \frac{\d s}{s^{p'(1-\alpha)}}=
\frac{(n+1)^{1-p'(1-\alpha)}}{1-p'(1-\alpha)}  .
$$
Hence, \eqref{expr-en-0} reduces to
\begin{align*}
\|v^n\|_X &\le \tau^{\alpha-1/p} \frac{(n+1)^{\alpha-1/p}}{(1-p'(1-\alpha))^{1/p'}}\|(w^j)_{j=0}^n\|_{\ell^p(X)}
\le  \frac{(2T)^{\alpha-1/p}}{(1-p'(1-\alpha))^{1/p'}} \|(w^j)_{j=0}^n\|_{\ell^p(X)}   ,
\end{align*}
where we have used the fact $\tau (n+1)\le 2T$ in the last inequality.
Since the last inequality holds for all $n=0,\dots,m,$
it follows that \eqref{en-intermsof-En} holds.
\end{proof}

Now we are ready to prove Theorem \ref{Frac-Gronwall-Euler}.\smallskip

{\it Proof of Theorem \ref{Frac-Gronwall-Euler}.}$\,\,$
For the backward Euler CQ we have $K(\zeta)=\big(\frac{1-\zeta}{\tau}\big)^\alpha$.
Since, $v^0=0$, $\bar\partial_\tau^\alpha v^0=0$, and
the identity \eqref{generating-frac} can be written as
$\big(\frac{1-\zeta}{\tau}\big)^\alpha\sum_{n=0}^\infty v^{n} \zeta^n
=\sum_{n=0}^\infty (\bar\partial^\alpha_\tau v^n)\zeta^n .$
Then Lemma \ref{Tech-Lemma-Gronwall} and
\eqref{Gronwall-Euler-cond} imply
\begin{align*}
\begin{aligned}
\|(v^n)_{n=0}^m\|_{\ell^\infty(X)}
&\le c\|(\bar\partial_\tau^\alpha v^n)_{n=0}^m\|_{\ell^p(X)}
\le c\kappa \|(v^n)_{n=0}^m\|_{\ell^p(X)}+c\sigma \\
&\le \epsilon\kappa\|(v^n)_{n=0}^m\|_{\ell^\infty(X)}
+c_\epsilon \kappa \|(v^n)_{n=0}^m\|_{\ell^1(X)}+c\sigma ,
\quad\forall\, 1\le m\le N .
\end{aligned}
\end{align*}
By choosing $\epsilon\kappa=1/2$ and collecting terms, and using the fact $v^0=0$, we obtain
\begin{align*}
\begin{aligned}
\|(v^n)_{n=1}^m\|_{\ell^\infty(X)}
\le c_\kappa \|(v^n)_{n=1}^m\|_{\ell^1(X)}+c\sigma ,
\quad\forall\, 1\le m\le N .
\end{aligned}
\end{align*}
That is, $\|v^m\|_{X}\le c_\kappa \tau \sum_{n=1}^m\|v^n\|_{X}+c\sigma$ for $1\le m\le N$.
Then the standard discrete Gr\"onwall's inequality gives, for sufficiently small  step size $\tau$,
\begin{align*}
\max_{1\le n\le N} \|v^n\|_{X} \le e^{c_\kappa T}c\sigma .
\end{align*}
Substituting this into \eqref{Gronwall-Euler-cond} yields \eqref{Gronwall-Euler-concl}.
The proof of Theorem \ref{Frac-Gronwall-Euler} is complete.
\endproof

To analyze other time-stepping schemes, we shall need the following lemma of discrete Mikhlin multipliers, which is
a simple consequence of Blunck's multiplier theorem \cite[Theorem 1.3]{Blunck:2001} through the transform
$\zeta=e^{-\mathrm{i}\theta}$. Here, a UMD space $X$ denotes a Banach space such that the Hilbert transform
$Hf(t) : =\int_\mathbb{R} \frac{f(s)}{t-s}\d s$ is bounded on $L^p(\mathbb{R};X)$ for all $1<p<\infty$
\cite{KunstmannWeis:2004}. Examples of UMD spaces include ${\mathbb R}^d$, $d\ge 1$, and $L^q(\Omega)$,
$1<q<\infty$, and their closed subspaces (e.g. the finite element space $S_h$ equipped with the $L^q(\Omega)$ norm).

\begin{lemma}[Discrete Mikhlin multipliers]\label{lem:Blunck}
Let $X$ be a UMD space and let $M: {\mathbb D}\to {\mathbb C}$ be an analytic function,
continuously differentiable up to $\partial{\mathbb D}\backslash\{\pm 1\}$, such that the set
\begin{equation*}
\big\{ M(\zeta):\,\zeta \in \partial{\mathbb D}\backslash\{\pm 1\} \big\} \, \cup\, 
\big\{(1-\zeta)(1+\zeta) M'(\zeta): \, \zeta \in \partial{\mathbb D}\backslash\{\pm 1\}\big\}
\end{equation*}
is bounded, and denote its bound by $c_R$.
Then for any $1 < p < \infty$ and any
sequence $(f^n)_{n=0}^\infty\in \ell^p(X)$,
the coefficients $u_n\in X$, $n=0,1,\dots$, in the power series expansion
\begin{align*}
M(\zeta) \sum_{n=0}^\infty f^n\zeta^n=\sum_{n=0}^\infty u^n\zeta^n ,
\qquad\forall\,\zeta\in {\mathbb D} ,
\end{align*}
satisfy
\begin{align*}
\|(u^n)_{n=0}^\infty \|_{\ell^p(X)}
\le c_{p,X}c_R\|(f^n)_{n=0}^\infty\|_{\ell^p(X)} ,
\end{align*}
where the constant $c_{p,X}$ is independent of the operators $M(\zeta)$,
$\zeta \in {\mathbb D}$.
\end{lemma}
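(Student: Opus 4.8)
The plan is to realise the lower-triangular convolution in the statement as the compression to $\mathbb{Z}_{\ge 0}$ of a genuine Fourier multiplier operator on $\ell^p(\mathbb{Z};X)$, and then to quote Blunck's multiplier theorem \cite[Theorem 1.3]{Blunck:2001}. Writing $M(\zeta)=\sum_{k\ge 0}m_k\zeta^k$ for the Taylor expansion and matching coefficients in $M(\zeta)\sum_n f^n\zeta^n=\sum_n u^n\zeta^n$ gives $u^n=\sum_{j=0}^n m_{n-j}f^j$. Extending $(f^n)$ and $(m_k)$ by zero to negative indices, the bi-infinite convolution $\tilde u^n:=\sum_{j\in\mathbb{Z}}m_{n-j}f^j$ coincides with $u^n$ for $n\ge 0$ and vanishes for $n<0$, so the $\ell^p$ norms on $\mathbb{Z}$ and on $\mathbb{Z}_{\ge 0}$ agree and it suffices to bound $(f^n)\mapsto(\tilde u^n)$ on $\ell^p(\mathbb{Z};X)$ (the step-size factor $\tau$ in the $\ell^p(X)$ norm is the same on both sides and plays no role). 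The symbol of this convolution operator, under the substitution $\zeta=e^{-\mathrm{i}\theta}$ suggested in the text, is the boundary function $m(\theta):=M(e^{-\mathrm{i}\theta})$; by hypothesis $M$ extends continuously differentiably to $\partial\mathbb{D}\setminus\{\pm1\}$, so $m$ is a function on the torus that is $C^1$ away from the two points $\theta=0,\pi$, with $\sup_\theta|m(\theta)|\le c_R$.

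Next I would check that $m$ satisfies the hypothesis of Blunck's theorem. Because $X$ is UMD and $m(\theta)$ acts as a scalar multiple of the identity, the $\mathcal{R}$-boundedness required in the operator-valued formulation is automatic (a bounded set of scalars is $\mathcal{R}$-bounded), so it remains to verify a Marcinkiewicz--Mikhlin condition for the scalar symbol. The zeroth-order bound $\|m\|_{L^\infty(\mathbb{T})}\le c_R$ is immediate from the first half of the hypothesis. For the first-order condition one computes, with $\zeta=e^{-\mathrm{i}\theta}$, that $\partial_\theta m(\theta)=-\mathrm{i}e^{-\mathrm{i}\theta}M'(\zeta)$ and $(1-\zeta)(1+\zeta)=1-\zeta^2=2\mathrm{i}e^{-\mathrm{i}\theta}\sin\theta$, hence $\sin\theta\,\partial_\theta m(\theta)=-\tfrac12(1-\zeta)(1+\zeta)M'(\zeta)$; the second half of the hypothesis therefore reads $\sup_\theta|\sin\theta\,\partial_\theta m(\theta)|\le\tfrac12 c_R$. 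Since $\sin\theta$ vanishes to first order exactly at the two singular points $\theta\in\{0,\pi\}$ of $m$, this is precisely the weighted-derivative (two-point Marcinkiewicz) condition, and Blunck's theorem delivers $\|(\tilde u^n)\|_{\ell^p(\mathbb{Z};X)}\le c_{p,X}c_R\|(f^n)\|_{\ell^p(\mathbb{Z};X)}$ with $c_{p,X}$ depending only on $p$ and the UMD constant of $X$. Restricting back to $n\ge 0$ yields the claim.

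The main obstacle is not a hard estimate but the boundary bookkeeping: one must justify that the distributional symbol of convolution by $(m_k)_{k\ge 0}$ really is the a.e.-defined boundary function $M(e^{-\mathrm{i}\theta})$ (an Abel/Fatou argument, using that $M$ is analytic in $\mathbb{D}$ and extends $C^1$ to $\partial\mathbb{D}\setminus\{\pm1\}$ with the hypothesised bounds controlling its growth toward $\pm1$), and that the cited theorem is being invoked with a two-point singular set $\{1,-1\}$. If one prefers a one-singularity formulation, the latter is handled by a smooth partition of unity on $\mathbb{T}$: split $M=M_1+M_2$ with $M_j$ smooth except near one of $\pm1$ --- each $M_j$ inherits the hypothesised bounds since multiplying by a fixed smooth cutoff preserves the class --- apply the theorem to $M_1$ (singular only at $\zeta=1$) directly and to $\zeta\mapsto M_2(-\zeta)$ (whose singularity is moved to $\zeta=1$, and for which $(1-\zeta)\tfrac{\d}{\d\zeta}M_2(-\zeta)=-(1-\zeta)M_2'(-\zeta)$ becomes the bounded quantity $(1+w)M_2'(w)$ after $w=-\zeta$), then transfer back using the modulation $(a^n)\mapsto((-1)^n a^n)$, which is an isometry of $\ell^p(\mathbb{Z};X)$.
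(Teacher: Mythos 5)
Your proposal is correct and follows essentially the same route as the paper, which proves the lemma only by remarking that it is a direct consequence of Blunck's multiplier theorem \cite[Theorem 1.3]{Blunck:2001} under the substitution $\zeta=e^{-\mathrm{i}\theta}$; your computation $\sin\theta\,\partial_\theta M(e^{-\mathrm{i}\theta})=-\tfrac12(1-\zeta)(1+\zeta)M'(\zeta)$ is exactly the translation of the stated hypothesis into Blunck's weighted-derivative condition, and the scalar-valued symbol makes the required $\mathcal{R}$-boundedness automatic, as you note. The additional bookkeeping you supply (zero-extension to $\mathbb{Z}$, identification of the boundary function as the symbol, and the two-point singular set $\{\pm1\}$, which Blunck's theorem already accommodates) fills in details the paper leaves implicit but does not change the argument.
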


Now other time-stepping schemes can be connected to the backward Euler CQ. The next result
gives a general criterion for the discrete fractional Gr\"onwall's inequality.
\begin{theorem}[General criterion for discrete fractional Gr\"onwall's inequality]
\label{Other-Scheme-A2} Let $X$ be a UMD space. If the generating function $K(\zeta)=\frac{1}{\tau^\alpha} \sum_{n=0}^\infty K_n\zeta^n$ satisfies
\begin{equation} \label{eqn:est_K}
| K(\zeta) |\ge \frac{1}{c}\bigg|\frac{1-\zeta}{\tau}\bigg|^\alpha\quad \mbox{and}\quad |(1-\zeta)(1+\zeta)K'(\zeta)|\le c|K(\zeta)|, \quad\forall\, \zeta\in
\partial\mathbb{D}\backslash\{\pm 1\} ,
\end{equation}
then the discrete fractional Gr\"onwall's inequality holds:
if $\alpha\in(0,1)$ and $p\in (1/\alpha,\infty)$,
and a sequence $v^n\in X$, $n=0,1,2,\dots$, with $v^0=0$, satisfies
\begin{align}\label{Gronwall-other-cond}
\|(\bar\partial_\tau^\alpha v^n)_{n=1}^m\|_{\ell^p(X)}
\le \kappa \|(v^n)_{n=1}^m\|_{\ell^p(X)}+\sigma ,
\quad\forall\, 1\le m\le N ,
\end{align}
for some positive constants $ \kappa$ and $\sigma$, then
there exists a $\tau_0>0$ such that for any $\tau<\tau_0$ there holds
\begin{align}\label{Gronwall-other-concl-2}
\|(v^n)_{n=1}^N\|_{\ell^\infty(X)}+\|(\bar\partial_\tau^\alpha v^n)_{n=1}^N\|_{\ell^p(X)}
\le c \sigma .
\end{align}
where the constants $c$ and $\tau_0$ are independent of $\sigma$, $\tau$, $N$ and $v^n$,
but may depend on $\alpha$, $p$, $\kappa$, $X$ and $T$.
\end{theorem}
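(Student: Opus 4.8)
The plan is to reduce the general scheme to the backward Euler CQ, for which the discrete fractional Gr\"onwall inequality is already established in Theorem \ref{Frac-Gronwall-Euler}. The key observation is that the generating function $K_{\rm BE}(\zeta)=\big(\frac{1-\zeta}{\tau}\big)^\alpha$ of the backward Euler CQ can be written as $K_{\rm BE}(\zeta)=M(\zeta)K(\zeta)$, where $M(\zeta):=K_{\rm BE}(\zeta)/K(\zeta)$; the first condition in \eqref{eqn:est_K} guarantees that $M$ is bounded on $\partial{\mathbb D}\backslash\{\pm1\}$, and a short computation using the quotient rule together with both conditions in \eqref{eqn:est_K} (and the elementary bound $|(1-\zeta)(1+\zeta)K_{\rm BE}'(\zeta)|\le c|K_{\rm BE}(\zeta)|$, which follows since $K_{\rm BE}$ has the same structural form and $K'_{\rm BE}(\zeta)=-\frac{\alpha}{\tau}\big(\frac{1-\zeta}{\tau}\big)^{\alpha-1}$) shows that $|(1-\zeta)(1+\zeta)M'(\zeta)|$ is bounded on $\partial{\mathbb D}\backslash\{\pm1\}$ as well. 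Thus $M$ satisfies the hypotheses of Lemma \ref{lem:Blunck}.

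First I would introduce the auxiliary sequence $(w^n)_{n=0}^\infty$ defined as the backward Euler CQ applied to $(v^n)$, i.e. $\bar\partial_\tau^{\alpha,{\rm BE}} v^n=:w^n$, equivalently $K_{\rm BE}(\zeta)\sum_n v^n\zeta^n=\sum_n w^n\zeta^n$ via \eqref{generating-frac}. Writing $g^n:=\bar\partial_\tau^\alpha v^n$ for the given scheme, so that $K(\zeta)\sum_n v^n\zeta^n=\sum_n g^n\zeta^n$, we get $\sum_n w^n\zeta^n=M(\zeta)\sum_n g^n\zeta^n$. Since $v^0=0$ implies $w^0=g^0=0$, Lemma \ref{lem:Blunck} gives, for each $1\le m\le N$ (after the usual truncation, setting $v^n=g^n=0$ for $n>m$ — note that truncation of $(v^n)$ does not exactly reproduce the truncation of $(g^n)$ or $(w^n)$, so I would instead apply the multiplier bound to the full sequences and use that $w^n$ for $n\le m$ depends only on $g^0,\dots,g^n$, i.e. the causal/lower-triangular structure of the convolution is preserved under both $M$ and its inverse), the estimate $\|(w^n)_{n=1}^m\|_{\ell^p(X)}\le c\|(g^n)_{n=1}^m\|_{\ell^p(X)}$.

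Next, combining this with the hypothesis \eqref{Gronwall-other-cond}, namely $\|(g^n)_{n=1}^m\|_{\ell^p(X)}\le\kappa\|(v^n)_{n=1}^m\|_{\ell^p(X)}+\sigma$, yields
\begin{align*}
\|(w^n)_{n=1}^m\|_{\ell^p(X)}\le c\kappa\|(v^n)_{n=1}^m\|_{\ell^p(X)}+c\sigma,\quad\forall\,1\le m\le N.
\end{align*}
This is precisely condition \eqref{Gronwall-Euler-cond} for the backward Euler CQ, with $\kappa$ replaced by $c\kappa$ and $\sigma$ by $c\sigma$. Applying Theorem \ref{Frac-Gronwall-Euler} then gives $\|(v^n)_{n=1}^N\|_{\ell^\infty(X)}+\|(w^n)_{n=1}^N\|_{\ell^p(X)}\le c\sigma$. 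Finally, to recover the bound on $(g^n)=(\bar\partial_\tau^\alpha v^n)$ claimed in \eqref{Gronwall-other-concl-2}, I would either feed the $\ell^\infty$ bound on $(v^n)$ back into \eqref{Gronwall-other-cond} directly, or equivalently invert the multiplier: $g^n$ is obtained from $w^n$ via $K(\zeta)/K_{\rm BE}(\zeta)=1/M(\zeta)$, and I would check that $1/M$ also satisfies the hypotheses of Lemma \ref{lem:Blunck}, which again follows from \eqref{eqn:est_K} (the first inequality bounds $1/M$, and the second, via the quotient rule, bounds $(1-\zeta)(1+\zeta)(1/M)'$). This gives $\|(g^n)_{n=1}^N\|_{\ell^p(X)}\le c\|(w^n)_{n=1}^N\|_{\ell^p(X)}\le c\sigma$.

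The main obstacle I anticipate is purely bookkeeping rather than conceptual: carefully justifying the step from finite truncated sequences to the doubly-infinite setting of Lemma \ref{lem:Blunck} and Theorem \ref{Frac-Gronwall-Euler}, so that the multiplier bounds and the backward Euler Gr\"onwall inequality apply uniformly in $m$ with constants independent of $\tau$, $N$. The causal (lower-triangular Toeplitz) structure of all the discrete convolutions involved is what makes this work — truncating the input past index $m$ does not affect outputs at indices $\le m$ — but it must be invoked explicitly. A secondary technical point is verifying the Mikhlin-type derivative bound for the quotient $M=K_{\rm BE}/K$ near $\zeta=\pm1$: one must confirm that the factor $(1-\zeta)(1+\zeta)$ in \eqref{eqn:est_K} is exactly what is needed so that the product rule applied to $M'=(K_{\rm BE}'K-K_{\rm BE}K')/K^2$ produces only bounded terms, using $|K_{\rm BE}|\simeq|K|$ from the first half of \eqref{eqn:est_K}.
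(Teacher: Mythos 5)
Your proposal is correct and follows essentially the same route as the paper: the paper's multiplier is exactly your $M=K_{\rm BE}/K$, its intermediate sequence $F^n$ is exactly your $w^n=\bar\partial_\tau^{\alpha,{\rm BE}}v^n$, and it handles the finite-horizon issue by the same truncation-plus-causality argument; the only difference is that the paper re-runs the Hardy-inequality, absorption and standard Gr\"onwall steps inline rather than citing Theorem \ref{Frac-Gronwall-Euler} as a black box. One caveat: your secondary suggestion of recovering the bound on $(\bar\partial_\tau^\alpha v^n)$ by inverting the multiplier does not follow from \eqref{eqn:est_K}, which only yields the lower bound $|K/K_{\rm BE}|\ge 1/c$ and no upper bound, so you should use your first option --- substituting the $\ell^\infty$ bound on $(v^n)$ back into \eqref{Gronwall-other-cond} --- which is precisely what the paper does.
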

\begin{proof}
First, we note that $\bar\partial^\alpha_\tau v^n=\tau^{-\alpha}\sum_{j=0}^{n}K_j v^{n-j}$, $n=0,1,2,\dots$,
are the coefficients in the power series expansion
\begin{align}
&K(\zeta)\sum_{n=0}^\infty v^{n} \zeta^n
=\sum_{n=0}^\infty (\bar\partial^\alpha_\tau v^n)\zeta^n  ,
\end{align}
it follows that
\begin{align}\label{eqn:Fn_zeta}
\sum_{n=0}^\infty v^n\zeta^n
=\bigg(\frac{\tau}{1-\zeta}\bigg)^{\alpha} \bigg[\frac{1}{K(\zeta)}\bigg(\frac{1-\zeta}{\tau}\bigg)^\alpha\bigg]
\sum_{n=0}^\infty  (\bar\partial^\alpha_\tau v^n)\zeta^n =\bigg(\frac{\tau}{1-\zeta}\bigg)^{\alpha}
\sum_{n=0}^\infty F^n\zeta^n ,
\end{align}
where $F^n$, $n=0,1,\dots$, are the coefficients in the expansion
\begin{equation*}
\bigg[\frac{1}{K(\zeta)}\bigg(\frac{1-\zeta}{\tau}\bigg)^\alpha\bigg]
\sum_{n=0}^\infty  (\bar\partial^\alpha_\tau v^n)\zeta^n =\sum_{n=0}^\infty F^n\zeta^n  .
\end{equation*}
By applying Lemma \ref{Tech-Lemma-Gronwall} to \eqref{eqn:Fn_zeta}, we obtain
\begin{equation} \label{Est-en-Fn}
\|(v^n)_{n=0}^m\|_{\ell^\infty(X)}
\le c \|(F^n)_{n=0}^m\|_{\ell^{p}(X)} ,\quad \forall\, 1\le m\le N .
\end{equation}
Let $m$ be fixed and define $\widetilde E^n=\bar\partial^\alpha_\tau v^n$ if $n\le m$ and
$\widetilde E^n=0$ if $n>m$. Let $\widetilde F^n$ be the coefficients of the power series
\begin{equation}\label{Eq_en_zeta}
\sum_{n=0}^\infty\widetilde F^n\zeta^n
=\bigg[\frac{1}{K(\zeta)}\bigg(\frac{1-\zeta}{\tau}\bigg)^\alpha\bigg]
\sum_{n=0}^\infty \widetilde E^n\zeta^n ,
\end{equation}
then $\widetilde F^n=F^n$ for $0\le n\le m$.
Now the conditions in \eqref{eqn:est_K} imply
\begin{equation*}
\bigg|\frac{1}{K(\zeta)} \bigg(\frac{1-\zeta}{\tau}\bigg)^\alpha\bigg| \le c \quad\mbox{and}\quad
\bigg|(1-\zeta)(1+\zeta)\frac{\d}{\d \zeta}\bigg[\frac{1}{K(\zeta)} \bigg(\frac{1-\zeta}{\tau}\bigg)^\alpha\bigg]\bigg|\le c, \quad \forall \zeta\in \partial\mathbb{D}\setminus\{\pm1\} .
\end{equation*}
By choosing $M(\zeta)=\frac{1}{K(\zeta)} \big(\frac{1-\zeta}{\tau}\big)^\alpha$ and applying Lemma \ref{lem:Blunck} to equation
\eqref{Eq_en_zeta}, we obtain
\begin{equation*}
\|(\widetilde F^n)_{n=0}^\infty\|_{\ell^{p}(X)}
\le c\|(\widetilde E^n)_{n=0}^\infty\|_{\ell^{p}(X)} ,
\end{equation*}
which further implies
\begin{equation*}
\|(F^n)_{n=0}^m\|_{\ell^{p}(X)}
= \|(\widetilde F^n)_{n=0}^m\|_{\ell^{p}(X)}
\le c \|(\widetilde E^n)_{n=0}^\infty\|_{\ell^{p}(X)}
=c \|(\bar\partial^\alpha_\tau v^n)_{n=0}^m\|_{\ell^{p}(X)} ,
\end{equation*}
where the constant $c$ is independent of $m$. The last inequality and \eqref{Est-en-Fn} yield
\begin{equation*}
\|(v^n)_{n=0}^m\|_{\ell^\infty(X)}
\le c \|(\bar\partial^\alpha_\tau v^n)_{n=0}^m\|_{\ell^{p}(X)} .
\end{equation*}
Substituting \eqref{Gronwall-other-cond} into the last inequality gives
\begin{align}\label{eqn:v-est}
\begin{aligned}
\|(v^n)_{n=1}^m\|_{\ell^\infty(X)}
&\le c\kappa \|(v^n)_{n=1}^m\|_{\ell^p(X)}+c\sigma \\
&\le \epsilon\kappa\|(v^n)_{n=1}^m\|_{\ell^\infty(X)}
+c_\epsilon \kappa \|(v^n)_{n=1}^m\|_{\ell^1(X)}+c\sigma ,
\quad\forall\, 1\le m\le N .
\end{aligned}
\end{align}
where $\epsilon>0$ is arbitrary. By choosing $\epsilon\kappa=1/2$, we obtain
\begin{align*}
\begin{aligned}
\|(v^n)_{n=1}^m\|_{\ell^\infty(X)}
&\le c_\kappa \|(v^n)_{n=1}^m\|_{\ell^1(X)}+c\sigma ,
\quad\forall\, 1\le m\le N .
\end{aligned}
\end{align*}
That is, $\|v^m\|_{X}\le c_\kappa \tau \sum_{n=1}^m\|v^n\|_{X}+c\sigma$ for $1\le m\le N$.
Then the standard discrete Gr\"onwall's inequality gives, for sufficiently small step size $\tau$,
\begin{align*}
\max_{1\le n\le N}\|v^n\|_{X} \le e^{c_\kappa T}c\sigma .
\end{align*}
This together with \eqref{Gronwall-other-cond} and \eqref{eqn:v-est} yields \eqref{Gronwall-other-concl-2}.
The proof of Theorem \ref{Other-Scheme-A2} is complete.
\end{proof}

By Theorem \ref{Other-Scheme-A2}, the discrete fractional
Gr\"onwall's inequality can be proved for the L1 scheme and general BDF CQs.

\begin{theorem}[Discrete Gr\"onwall's inequality for L1 scheme and BDF CQ]\label{L1-CQ-gronwall}$\,$\\
Let $X$ be a UMD space. For both L1 scheme and CQ generated by the $k^{\rm th}$-order BDF, with $1\le
k\le 6$, the discrete fractional Gr\"onwall's inequality holds: if $\alpha\in(0,1)$ and $p\in
(1/\alpha,\infty)$, and a sequence $v^n\in X$, $n=0,1,2,\dots$, with $v^0=0$, satisfies
\begin{align*}
\|(\bar\partial_\tau^\alpha v^n)_{n=1}^m\|_{\ell^p(X)}
\le \kappa \|(v^n)_{n=1}^m\|_{\ell^p(X)}+\sigma ,\quad\forall\, 1\le m\le N ,
\end{align*}
for some positive constants $ \kappa$ and $\sigma$,
then there exists a $\tau_0>0$ such that for any $\tau<\tau_0$ there holds
\begin{align*}
\|(v^n)_{n=1}^N\|_{\ell^\infty(X)}+\|(\bar\partial_\tau^\alpha v^n)_{n=1}^N\|_{\ell^p(X)}\le c\sigma,
\end{align*}
where the constants $c$ and $\tau_0$ are independent of $\sigma$, $\tau$, $N$ and $v^n$,
but may depend on $\alpha$, $p$, $\kappa$, $X$ and $T$.
\end{theorem}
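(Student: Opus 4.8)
The plan is to deduce this from Theorem~\ref{Other-Scheme-A2}: since both the L1 scheme and the $k$th-order BDF CQ are discrete convolutions of the form \eqref{conv-approx}, it suffices to show that in each case the generating function $K(\zeta)$ is analytic on $\overline{\mathbb D}\setminus\{1\}$ (so that the smoothness hypothesis of Theorem~\ref{Other-Scheme-A2} is met) and satisfies the two bounds in \eqref{eqn:est_K} on $\partial{\mathbb D}\setminus\{\pm1\}$, with a constant independent of $\tau$. Both bounds reduce to the comparison $c^{-1}|(1-\zeta)/\tau|^\alpha\le|K(\zeta)|\le c|(1-\zeta)/\tau|^\alpha$ together with a Mikhlin-type control of $(1-\zeta)(1+\zeta)K'(\zeta)$ by $|K(\zeta)|$, and for both families these are elementary analytic facts about explicit functions; the relevant estimates on $K(\zeta)$ are closely related to those established, in the course of proving fractional discrete maximal regularity, in \cite{JLZ}. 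I would organise the verification as follows.

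\emph{BDF CQ.} Here $K(\zeta)=\tau^{-\alpha}\delta_k(\zeta)^\alpha$ with $\delta_k(\zeta)=\sum_{j=1}^k\frac1j(1-\zeta)^j=(1-\zeta)\,q_k(1-\zeta)$, where $q_k(w)=\sum_{j=1}^k w^{j-1}/j$ and $q_k(0)=1$. The one nontrivial input is the classical fact that for $1\le k\le6$ the polynomial $q_k$ has no zero in the closed disk $\{w:|w-1|\le1\}$ (equivalently, $\delta_k$ vanishes on $\overline{\mathbb D}$ only at the simple zero $\zeta=1$); this is exactly the range for which it holds, and is the source of the restriction $k\le 6$. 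Since $\{1-\zeta:\zeta\in\overline{\mathbb D}\}$ is precisely that disk, compactness gives $c_0|1-\zeta|\le|\delta_k(\zeta)|\le c_1|1-\zeta|$ on $\overline{\mathbb D}$, which yields the first bound in \eqref{eqn:est_K} (and shows $K$ is analytic on $\overline{\mathbb D}\setminus\{1\}$). For the second, a direct differentiation gives the closed form $\delta_k'(\zeta)=-\sum_{j=1}^k(1-\zeta)^{j-1}=\big((1-\zeta)^k-1\big)/\zeta$, so that $K'(\zeta)=\alpha\tau^{-\alpha}\delta_k(\zeta)^{\alpha-1}\delta_k'(\zeta)$ and, on $|\zeta|=1$,
\begin{align*}
\big|(1-\zeta)(1+\zeta)K'(\zeta)\big|
&=\alpha\tau^{-\alpha}|\delta_k(\zeta)|^{\alpha-1}\,|1-\zeta|\,|1+\zeta|\,|\delta_k'(\zeta)| \\
&\le \frac{2\alpha(2^k+1)}{c_0}\,|K(\zeta)|,
\end{align*}
where we used $|1+\zeta|\le2$, $|\delta_k'(\zeta)|\le 2^k+1$ and the lower bound above. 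Hence \eqref{eqn:est_K} holds for every $1\le k\le6$.

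\emph{L1 scheme.} Here $K(\zeta)=\tau^{-\alpha}\dfrac{(1-\zeta)^2}{\zeta\,\Gamma(2-\alpha)}\,\mathrm{Li}_{\alpha-1}(\zeta)$, and since $\mathrm{Li}_{\alpha-1}$ is analytic on $\mathbb C\setminus[1,\infty)\supset\overline{\mathbb D}\setminus\{1\}$, $K$ is analytic on $\overline{\mathbb D}\setminus\{1\}$ and the only point of $\partial{\mathbb D}$ at which it can be singular is $\zeta=1$. Near $\zeta=1$ I would use the known singular expansion of the polylogarithm, namely that $\mathrm{Li}_{\alpha-1}(\zeta)-\Gamma(2-\alpha)(-\log\zeta)^{\alpha-2}$ is analytic at $\zeta=1$, so $\mathrm{Li}_{\alpha-1}(\zeta)=\Gamma(2-\alpha)(1-\zeta)^{\alpha-2}(1+O(|1-\zeta|))$ and therefore $K(\zeta)=\big((1-\zeta)/\tau\big)^{\alpha}(1+o(1))$ as $\zeta\to1$; combined with the analogous expansion $\mathrm{Li}_{\alpha-2}(\zeta)=\Gamma(3-\alpha)(1-\zeta)^{\alpha-3}(1+O(|1-\zeta|))$ and the identity $\tfrac{\d}{\d\zeta}\mathrm{Li}_{\alpha-1}(\zeta)=\zeta^{-1}\mathrm{Li}_{\alpha-2}(\zeta)$, a short computation shows that $(1-\zeta)(1+\zeta)K'(\zeta)/K(\zeta)$ remains bounded as $\zeta\to1$. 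Thus \eqref{eqn:est_K} holds on a punctured neighbourhood of $\zeta=1$. On the complementary compact subset of $\partial{\mathbb D}\setminus\{\pm1\}$ it remains only to note that $K$ and $K'$ are continuous there, that $K$ does not vanish (the factor $(1-\zeta)^2/\zeta$ vanishes only at $\zeta=1$, and $\mathrm{Li}_{\alpha-1}$ has no zero on $\partial{\mathbb D}\setminus\{1\}$; in particular $K$ extends analytically across $\zeta=-1$ with $K(-1)\neq0$), and that $|1-\zeta|$ is bounded above and below there; this yields \eqref{eqn:est_K} on that set with constants depending only on $\alpha$.

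With \eqref{eqn:est_K} verified for both schemes, Theorem~\ref{Other-Scheme-A2} immediately gives the asserted discrete fractional Gr\"onwall inequality. The only ingredients that I expect to need genuine (if standard) work are the two spectral facts used above: the zero-freeness of $q_k$ on $\{w:|w-1|\le1\}$ for $1\le k\le6$, which fails for $k\ge7$ and thereby fixes the admissible orders, and the matching two-sided asymptotics (and the non-vanishing on the unit circle) of $\mathrm{Li}_{\alpha-1}$ and $\mathrm{Li}_{\alpha-2}$ for the L1 scheme; both are available in the convolution-quadrature literature and in \cite{JLZ}.
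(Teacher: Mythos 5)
Your proposal is correct and follows essentially the same route as the paper: reduce to Theorem \ref{Other-Scheme-A2} and verify the two conditions in \eqref{eqn:est_K}, using for the BDF CQ the zero-freeness of $\sum_{j=1}^k\frac1j(1-\zeta)^{j-1}$ on the unit circle for $1\le k\le 6$, and for the L1 scheme the singular expansion of $\mathrm{Li}_{\alpha-1}$ near $\zeta=1$ together with its non-vanishing on $\partial\mathbb{D}\setminus\{1\}$. The only cosmetic difference is in which sub-facts are proved versus cited: you sketch the L1 derivative bound via $\mathrm{Li}_{\alpha-1}'(\zeta)=\zeta^{-1}\mathrm{Li}_{\alpha-2}(\zeta)$ where the paper cites \cite[Lemma 4.3]{JLZ}, while the paper proves the non-vanishing of $\mathrm{Li}_{\alpha-1}$ on the circle explicitly (via the series with $A_\theta$, $B_\theta$) where you defer to the literature.
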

\begin{proof}
By Theorem \ref{Other-Scheme-A2}, it suffices to show that the generating functions $K(\zeta)$ of the L1
scheme and CQ satisfy \eqref{eqn:est_K}. We discuss them separately.
First, for the L1 scheme, $K(\zeta)=\frac{1}{\Gamma(2-\alpha)\tau^\alpha}\frac{(1-\zeta)^2}{\zeta}
\mathrm{Li}_{\alpha-1}(\zeta)$ converges for $\zeta\in\partial{\mathbb D}\backslash\{1\}$ and has
the following asymptotic expansion (cf. \cite[Theorem 1]{Flajolet:1999}, or \cite[equation (4.6)]{JLZ})
\begin{equation*}
\tau^{\alpha} K(\zeta) = (1-\zeta)^{\alpha} + o((1-\zeta)^{\alpha})  ,
\quad \mbox{as}\,\,\, \zeta\rightarrow 1.
\end{equation*}
If $\zeta\in \partial{\mathbb D}\backslash\{1\}$ is sufficiently close to $1$, then
$$\tau^\alpha |K(\zeta)|\ge \tfrac{1}{2}|1-\zeta|^\alpha.$$
Meanwhile, we recall the following series expansion (cf. \cite[equation (4.5)]{JLZ})
\begin{equation*}
\begin{aligned}
\frac{\mathrm{Li}_{\alpha-1}(e^{-{\rm i}\theta})}{\Gamma(2-\alpha)}
        & =(2\pi)^{\alpha-2}\bigg(
       \cos\big(\frac{(2-\alpha)\pi}{2}\big)(A_{\theta}+B_{\theta})
-   {\mathrm i}\sin\big(\frac{(2-\alpha)\pi}{2}\big) (A_{\theta}-B_{\theta})  \bigg),
\end{aligned}
\end{equation*}
where $A_{\theta}=\sum_{k=0}^\infty\left(k+\frac{\theta}{2\pi}\right)^{\alpha-2}$
and $ B_{\theta}=\sum_{k=0}^\infty\left(k+1-\frac{\theta}{2\pi}\right)^{\alpha-2}.$
Thus, if $\zeta=e^{-{\rm i}\theta}$ is away from $1$, then $\theta$ is away from $0$ and $2\pi$, and thus
$A_{\theta}+B_{\theta}\ge c$. This shows $|\mathrm{Li}_{\alpha-1}(e^{-{\rm i}\theta})|>c$.
Since $|1-\zeta|^2\geq c|1-\zeta|^\alpha$ when $\zeta=e^{-{\rm i}\theta}$ is away from $1$, it follows that
\begin{equation*}
  \tau^\alpha |K(\zeta)|=\frac{|\mathrm{Li}_{\alpha-1}(\zeta)|}{\Gamma(2-\alpha)}|1-\zeta|^2\ge c|1-\zeta|^2\ge c|1-\zeta|^\alpha.
\end{equation*}
Overall, the first inequality of \eqref{eqn:est_K} holds for the generating function $K(\zeta)$ of the L1 scheme.
The second inequality of \eqref{eqn:est_K} has been proved in \cite[Lemma 4.3]{JLZ}. This shows
the assertion for the L1 scheme.

Next we turn to the CQ. For the CQ generated by the $k^{\rm th}$-order BDF, the generating function $K(\zeta)$ satisfies
\begin{equation*}
\bigg(\frac{\tau}{1-\zeta}\bigg)^\alpha  K(\zeta)
=\bigg(\sum_{j=1}^k \frac {1}{j} (1-\zeta)^{j-1} \bigg)^{\alpha} .
\end{equation*}
Since the function $\sum_{j=1}^k \frac {1}{j} (1-\zeta)^{j-1}$ has no root on the unit circle $\partial\mathbb{D}$
for $1\le k\le 6$ (see \cite[Proof of Lemma 2]{CreedonMiller:1975}
or \cite[pp. 246-247]{HairerNorsettWanner:2010}), it follows that
\begin{equation*}
\bigg|\bigg(\frac{\tau}{1-\zeta}\bigg)^\alpha K(\zeta) \bigg|=\bigg|\sum_{j=1}^k \frac {1}{j} (1-\zeta)^{j-1}\bigg|^\alpha
\ge c .
\end{equation*}
This proves the first inequality of \eqref{eqn:est_K}. Note that
\begin{equation*}
\begin{aligned}
(1+\zeta)(1-\zeta)K'(\zeta)
&=
-(1+\zeta)(1-\zeta) \frac{\alpha}{\tau^{\alpha}}\bigg(\sum_{j=1}^k \frac {1}{j} (1-\zeta)^{j} \bigg)^{\alpha-1}
\sum_{j=1}^{k} (1-\zeta)^{j-1} \\
&=
- \frac{\alpha}{\tau^{\alpha}}(1+\zeta)(1-\zeta)^\alpha \bigg(\sum_{j=1}^k \frac {1}{j} (1-\zeta)^{j-1} \bigg)^{\alpha-1}
\sum_{j=0}^{k-1} (1-\zeta)^{j},
\end{aligned}
\end{equation*}
and so for any $\zeta\in\partial{\mathbb D}\backslash\{\pm 1\}$, there holds
\begin{equation*}
\begin{aligned}
\bigg|\frac{(1+\zeta)(1-\zeta)K'(\zeta)}{K(\zeta)}\bigg|=
\bigg|\frac{\alpha(1+\zeta)\big(\sum_{j=1}^k \frac {1}{j} (1-\zeta)^{j-1} \big)^{\alpha-1}
\sum_{j=0}^{k-1} (1-\zeta)^{j}}{\big(\sum_{j=1}^k \frac {1}{j} (1-\zeta)^{j-1} \big)^{\alpha}}\bigg|
\le c.
\end{aligned}
\end{equation*}
where the last inequality holds, since the denominator
$\big(\sum_{j=1}^k \frac {1}{j} (1-\zeta)^{j-1} \big)^{\alpha}$ has no root on $\partial{\mathbb D}$.
This shows the second part of \eqref{eqn:est_K}, completing the  proof of the theorem.
\end{proof}

\begin{remark}
{\upshape
In Theorems  \ref{Other-Scheme-A2} and \ref{L1-CQ-gronwall}, if we assume
\begin{align*}
\|(\bar\partial_\tau^\alpha v^n)_{n=1}^m\|_{\ell^p(X)}
\le \kappa \|(v^n)_{n=1}^{m-1}\|_{\ell^p(X)}+\sigma ,
\quad\forall\, 1\le m\le N ,
\end{align*}
i.e., the index on the right-hand side is slightly changed, then we have
\begin{align*}
\|(v^n)_{n=1}^N\|_{\ell^\infty(X)}+\|(\bar\partial_\tau^\alpha v^n)_{n=1}^N\|_{\ell^p(X)}
\le c  \sigma ,
\end{align*}
without any restriction on the step size $\tau$.}
\end{remark}

\section{Regularity of the solution}\label{sec:regularity}
Now we discuss the existence, uniqueness and regularity for the solutions
to \eqref{nonlinear-PDE} and  \eqref{nonlinear-FEM}.
These results are needed in the numerical analysis in Section \ref{sec:error}.
The main result of this section is the following theorem.

\begin{theorem}\label{THM:Reg}
Let $u_0\in H_0^1(\Omega)\cap H^2(\Omega)$, and let $f:{\mathbb R}\rightarrow {\mathbb R}$ be Lipschitz continuous.
Then problem \eqref{nonlinear-PDE} has a unique solution $u$ such that
\begin{align}
&u\in C^\alpha([0,T];L^2(\Omega)) \cap
C([0,T];H^1_0(\Omega)\cap H^2(\Omega)) ,\quad
\partial_t^\alpha u\in C([0,T];L^2(\Omega)) , \label{reg-PDE}\\
&\partial_tu(t)\in L^2(\Omega)\quad
\mbox{and}\quad
 \|\partial_tu(t)\|_{L^2(\Omega)}\le ct^{\alpha-1}
\quad \mbox{for}\,\,\, t\in(0,T] .
 \label{reg-PDE2}
\end{align}
Similarly, problem \eqref{nonlinear-FEM} has a unique solution $u_h$ such that
\begin{align}
&\|u_h\|_{C^\alpha([0,T];L^2(\Omega))}
+\|\Delta_hu_h\|_{C([0,T];L^2(\Omega))} +
\|\partial_t^\alpha u_h\|_{C([0,T];L^2(\Omega))}
\le c, \label{reg-FEM}\\
& \|\partial_tu_h(t)\|_{L^2(\Omega)}\le ct^{\alpha-1}
\quad \mbox{for}\,\,\, t\in(0,T] .
 \label{reg-FEM2}
\end{align}
The constant $c$ above is independent of the mesh size $h$,
but may depend on $T$.
\end{theorem}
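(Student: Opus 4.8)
The plan is to recast \eqref{nonlinear-PDE} as the Duhamel equation
\[
u(t)=F(t)u_0+\int_0^t\bar E(t-s)f(u(s))\,\d s ,
\]
where $F(t)=E_{\alpha,1}(t^\alpha\Delta)$ and $\bar E(t)=t^{\alpha-1}E_{\alpha,\alpha}(t^\alpha\Delta)$ are the solution operators of the linear subdiffusion equation $\partial_t^\alpha v-\Delta v=g$, $v(0)=v_0$, expressed via the Mittag--Leffler functions (equivalently, via the Laplace transform), and to treat it by a fixed-point argument combined with the smoothing properties of $F$ and $\bar E$ and the fractional Gr\"onwall inequality of Theorem~\ref{Frac-Gronwall}. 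The properties of the linear operators I would use are the standard smoothing bounds $\|(-\Delta)^\beta F(t)v\|_{L^2}\le ct^{-\alpha\beta}\|v\|_{L^2}$ and $\|(-\Delta)^\beta\bar E(t)v\|_{L^2}\le ct^{\alpha(1-\beta)-1}\|v\|_{L^2}$ for $0\le\beta\le1$, the estimate $\|\partial_t\bar E(t)\|\le ct^{\alpha-2}$, the identities $\partial_t\big(F(t)v\big)=t^{\alpha-1}E_{\alpha,\alpha}(t^\alpha\Delta)\Delta v$ and $(-\Delta)\int_0^t\bar E(s)\,\d s=I-F(t)$, and the (continuous) maximal $L^p$-regularity $\|\partial_t^\alpha w\|_{L^p(0,T;L^2)}+\|\Delta w\|_{L^p(0,T;L^2)}\le c\|g\|_{L^p(0,T;L^2)}$ for $w$ with $w(0)=0$; here $p\in(1/\alpha,\infty)$ is fixed. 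Since $f$ is Lipschitz, $\|f(v)\|_{L^2(\Omega)}\le c+L\|v\|_{L^2(\Omega)}$.

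For existence I would run the Picard iteration $u^{(0)}=F(\cdot)u_0$, $u^{(k+1)}=F(\cdot)u_0+\int_0^\cdot\bar E(\cdot-s)f(u^{(k)}(s))\,\d s$ in $C([0,T];L^2(\Omega))$. From $\|\bar E(t)\|\le ct^{\alpha-1}$ and the Lipschitz bound, $\|u^{(k+1)}(t)-u^{(k)}(t)\|_{L^2}\le cL\int_0^t(t-s)^{\alpha-1}\|u^{(k)}(s)-u^{(k-1)}(s)\|_{L^2}\,\d s$, and iterating this Volterra inequality gives the Mittag--Leffler bound $\|u^{(k+1)}(t)-u^{(k)}(t)\|_{L^2}\le c\,(cL\Gamma(\alpha))^k t^{k\alpha}/\Gamma(k\alpha+1)$, whose sum over $k$ converges; hence $(u^{(k)})$ is Cauchy and converges to a solution $u\in C([0,T];L^2(\Omega))$ with $\|u\|_{C([0,T];L^2)}\le c$. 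Uniqueness follows from the same generalized (Volterra) Gr\"onwall inequality applied to the difference of two solutions, or from Theorem~\ref{Frac-Gronwall} with $\sigma=0$, $\kappa=cL$, after bounding $\|\partial_t^\alpha w\|_{L^p(0,s;L^2)}$ by $cL\|w\|_{L^p(0,s;L^2)}$ via maximal regularity.

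The remaining regularity is a three-step bootstrap; set $g:=f(u)$, so $\|g\|_{C([0,T];L^2)}\le c$. \emph{Step A}: from the Duhamel formula, $\|\partial_t(F(t)u_0)\|_{L^2}=\|t^{\alpha-1}E_{\alpha,\alpha}(t^\alpha\Delta)\Delta u_0\|_{L^2}\le ct^{\alpha-1}\|\Delta u_0\|_{L^2}$ and $\|\partial_t\bar E(t)\|\le ct^{\alpha-2}$ yield, after elementary subadditivity estimates, $u\in C^\alpha([0,T];L^2)$ with $[u]_{C^\alpha}\le c(\|\Delta u_0\|_{L^2}+\|g\|_{C([0,T];L^2)})\le c$. \emph{Step B}: consequently $g=f(u)\in C^\alpha([0,T];L^2)$, and, splitting $\int_0^t\bar E(t-s)g(s)\,\d s=\int_0^t\bar E(t-s)\big(g(s)-g(t)\big)\,\d s+\big(\int_0^t\bar E(s)\,\d s\big)g(t)$ and using $(-\Delta)\int_0^t\bar E(s)\,\d s=I-F(t)$, $\|(-\Delta)\bar E(t)\|\le ct^{-1}$, and $\|g(s)-g(t)\|_{L^2}\le[g]_{C^\alpha}|s-t|^\alpha$, one obtains $\Delta u\in C([0,T];L^2)$ with norm $\le c$; hence $u\in C([0,T];H^2(\Omega)\cap H_0^1(\Omega))$ and $\partial_t^\alpha u=\Delta u+f(u)\in C([0,T];L^2)$ with norm $\le c$ --- this, with Step A, is \eqref{reg-PDE}. \emph{Step C}: the homogeneous part again contributes $\le ct^{\alpha-1}\|\Delta u_0\|_{L^2}$, while for the Duhamel part I would establish (as in the linear theory for H\"older-in-time data) $\big\|\partial_t\!\int_0^t\bar E(t-s)g(s)\,\d s\big\|_{L^2}\le ct^{\alpha-1}\|g(0)\|_{L^2}+ct^{2\alpha-1}[g]_{C^\alpha}\le ct^{\alpha-1}$, so $\|\partial_t u(t)\|_{L^2}\le ct^{\alpha-1}$, which is \eqref{reg-PDE2}. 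For \eqref{nonlinear-FEM} (with $u_h(0)=R_hu_0$) the same argument applies verbatim with $-\Delta$ replaced by $-\Delta_h$: since $-\Delta_h$ is self-adjoint and positive definite on $S_h$ with smallest eigenvalue $\ge\lambda_1(\Omega)>0$ uniformly in $h$, all the linear smoothing estimates hold with $h$-independent constants, while $\|\Delta_hR_hu_0\|_{L^2}=\|P_h\Delta u_0\|_{L^2}\le\|\Delta u_0\|_{L^2}$ and $\|P_hf(v)\|_{L^2}\le\|f(v)\|_{L^2}$; this gives \eqref{reg-FEM}--\eqref{reg-FEM2} with constants independent of $h$.

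The main obstacle is Step C. Because $f(u)$ is a priori only H\"older (not Lipschitz) in time, the Duhamel integral cannot be differentiated directly, so the clean rate $\|\partial_t u(t)\|_{L^2}\le ct^{\alpha-1}$ requires either a delicate smoothing estimate for the linear subdiffusion equation with H\"older-in-time source, or a weighted-norm bootstrap for $\Lambda(t):=\sup_{0<s\le t}s^{1-\alpha}\|\partial_s u(s)\|_{L^2}$, built on the identity $\partial_t\int_0^t\bar E(t-s)g(s)\,\d s=\bar E(t)g(0)+\int_0^t\bar E(t-s)\partial_s g(s)\,\d s$, the chain-rule bound $\|\partial_s g(s)\|_{L^2}\le L\|\partial_s u(s)\|_{L^2}$, and the convergent integral $\int_0^t(t-s)^{\alpha-1}s^{\alpha-1}\,\d s\sim t^{2\alpha-1}$, which makes $\Lambda$ finite on a short interval $[0,t_*]$ and is then propagated to $[0,T]$. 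A second point to watch, carried throughout the discrete version \eqref{nonlinear-FEM}, is keeping every constant independent of $h$, which forces one to rely only on the uniform sectoriality and positive-definiteness of $-\Delta_h$ and never on inverse estimates.
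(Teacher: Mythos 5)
Your proposal is correct and follows essentially the same route as the paper: both recast the problem as the Duhamel integral equation built from the linear solution operators $F$ and $E$, obtain existence and uniqueness by a fixed-point argument, and then bootstrap regularity in the same order ($C^\alpha$ in time; then $C([0,T];H^2)$ via the identical splitting $AF(t)(Au_0+f(u(t)))+\int_0^t AE(t-s)\,(f(u(s))-f(u(t)))\,\d s$; then the weighted bound $t^{1-\alpha}\|u'(t)\|_{L^2(\Omega)}\le c$ closed by differentiating the integral equation and absorbing the convolution term), treating the continuous and semidiscrete problems in a unified abstract setting whose constants depend only on the $h$-independent resolvent estimate for $\Delta_h$. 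The only differences are cosmetic: you use Picard iteration with Mittag--Leffler bounds where the paper uses a contraction in an exponentially weighted norm, and your Step A derives $C^\alpha$ directly from the smoothing of $E$ acting on the bounded datum $f(u)$ where the paper runs a Gr\"onwall argument on difference quotients.
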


\begin{remark}\label{Reg-non-Lipschitz}
{\upshape
For smooth initial data and right-hand side, in the absence of extra compatibility conditions, the
regularity results \eqref{reg-PDE}-\eqref{reg-PDE2} and the $h$-independent estimates
\eqref{reg-FEM}-\eqref{reg-FEM2} are sharp with respect to the H\"older continuity in time.
The regularity \eqref{reg-PDE} was shown in \cite{SakamotoYamamoto:2011} for linear subdiffusion equations
and in \cite{Luchko:2013} for a semilinear problem with Neumann boundary conditions under certain
compatibility conditions. However, we are not aware of any existing results such as \eqref{reg-PDE2} and
\eqref{reg-FEM}-\eqref{reg-FEM2} for semilinear problems without compatibility conditions, which are
important for the numerical analysis in Section \ref{sec:error}.
}
\end{remark}

\begin{remark}
{\upshape If $f$ is smooth but not Lipschitz continuous,
and problems \eqref{nonlinear-PDE} and \eqref{nonlinear-FEM} have unique bounded solutions,
respectively, then $f(u)$, $f'(u)$, $f(u_h)$ and $f'(u_h)$ are still bounded.
In this case, the estimates \eqref{reg-PDE}-\eqref{reg-PDE2} and
\eqref{reg-FEM}-\eqref{reg-FEM2} are still valid, which
can be seen from the proof of Theorem \ref{THM:Reg}.}
\end{remark}

We begin with some preliminary results. Let $L^2_h(\Omega)$ be the vector space $S_h$
equipped with the norm of $L^2(\Omega)$ and let $H^2_h(\Omega)$ be the vector space $S_h$
equipped with the norm
\begin{align*}
\|v_h\|_{H^2_h(\Omega)}:=
\|v_h\|_{L^2(\Omega)}+\|\Delta_hv_h\|_{L^2(\Omega)},
\quad \forall\, v_h\in S_h .
\end{align*}
To analyze $u(t)$ and $u_h(t)$ in a unified way, we consider the following abstract problem:
\begin{align}\label{nonlinear-abstr}
\left\{
\begin{aligned}
&\partial_t^\alpha u(t)- Au(t)=P f(u(t)) &&\mbox{for}\,\,\,t\in (0,T],\\
&u(0)=u_0 ,
\end{aligned}
\right.
\end{align}
where the notation $(X,D,A,u,P,u_0)$ denotes either
$(L^2(\Omega),H^1_0(\Omega)\cap H^2(\Omega),\Delta,u,I,u_0)$ or
$(L^2_h(\Omega),H^2_h(\Omega),\Delta_h,u_h,P_h,R_hu_0)$,
with $I$ denoting the identity operator.
In a bounded convex polygonal domain $\Omega$, the norm of $D$ is equivalent to the graph norm, i.e.,
\begin{align}\label{norm-D}
\|v\|_D \sim \|v\|_X+\|Av\|_X,\quad\forall\, v\in D.
\end{align}
Let $\|\cdot\|_{X\to X}$ be the operator norm on the space $X$. Then the operator $A$ satisfies the following resolvent estimate
\cite[Example 3.7.5 and Theorem 3.7.11]{ABHN}:
\begin{equation*}
  \| (z -A)^{-1} \|_{X\rightarrow X}\le c_\phi |z|^{-1},  \quad \forall z \in \Sigma_{\phi},
  \,\,\,\forall\,\phi\in(0,\pi) ,
\end{equation*}
where for $\phi\in(0,\pi)$, $\Sigma_{\phi}:=\{z\in{\mathbb C}\backslash\{0\}: |{\rm arg}(z)|<\phi\}$.
This further implies
\begin{equation}\label{eqn:resol}
\begin{aligned}
 & \| (z^{\alpha}-A)^{-1} \|_{X\rightarrow X}
\le c_{\phi,\alpha} |z|^{-\alpha},
 &&\forall z \in \Sigma_{\phi} ,
 \,\,\,\forall\,\phi\in(0,\pi) ,\\
& \| A(z^{\alpha}-A)^{-1} \|_{X\rightarrow X} \le c_{\phi,\alpha} ,
&&\forall z \in \Sigma_{\phi} ,
 \,\,\,\forall\,\phi\in(0,\pi) .
 \end{aligned}
\end{equation}

Let $g(t)=P f(u(t))$, and $w:=u-u_0$. Then $w$ satisfies the following equation
\begin{equation}\label{PDE-wt}
    \partial_t^\alpha w(t)   - A w(t)  = A u_0 + g(t)  ,
\end{equation}
with $w(0)=0$. By means of Laplace transform, denoted by $~\widehat{}~$, we obtain
\begin{equation*}
  z^\alpha \widehat{w}(z) - A\widehat w(z) = z^{-1}Au_0+\widehat {g}(z),
\end{equation*}
which together with \eqref{eqn:resol} implies $\widehat{w}(z)=(z^\alpha-A)^{-1}(z^{-1}Au_0+\widehat{g}(z))$. By
inverse Laplace transform and convolution rule, the solution
$w(t)$ to \eqref{PDE-wt} is given by
\begin{align}\label{Int-Eq-w}
w(t)=F(t)A  u_0+\int_0^t E(t-s)g(s)\d s ,
\end{align}
where the operators $F(t):X \to X $ and $E(t):X \to X $ are defined by 
\begin{equation}\label{eqn:EF}
F(t):=\frac{1}{2\pi {\rm i}}\int_{\Gamma_{\theta,\delta }}e^{zt} z^{-1} (z^\alpha-A )^{-1}\, \d z
\quad\mbox{and}\quad E(t):=\frac{1}{2\pi {\rm i}}\int_{\Gamma_{\theta,\delta}}e^{zt}  (z^\alpha-A )^{-1}\, \d z ,
\end{equation}
respectively. Clearly, we have $E(t)=F^\prime(t)$. The contour $\Gamma_{\theta,\delta}$ is defined by
\begin{equation}\label{contour-Gamma}
  \Gamma_{\theta,\delta}=\left\{z\in \mathbb{C}: |z|=\delta , |\arg z|\le \theta\right\}\cup
  \{z\in \mathbb{C}: z=\rho e^{\pm {\rm i}\theta}, \rho\ge \delta \},
\end{equation}
oriented with an increasing imaginary part, where $\theta\in(\pi/2,\pi)$ is fixed. In view of \eqref{Int-Eq-w},
$u$ is the solution of problem \eqref{nonlinear-abstr} if and only if it is the solution of
\begin{align}\label{re-form-nonlinear}
u(t)-u_0=F(t)A  u_0+\int_0^t E(t-s)  P f(u(s)) \d s .
\end{align}

The next lemma summarizes the mapping properties of the operators $F$ and $E$.
These are partially known \cite[Section 2]{SakamotoYamamoto:2011} and \cite{McLean:2010}. We only
sketch the proof for completeness.
\begin{lemma}\label{lem:smoothing}
For the operators $F$ and $E$, the
following properties hold.
\begin{itemize}
\item[$\rm(i)$] $t^{-\alpha}\|F(t)\|_{X \rightarrow X }
+t^{1-\alpha}\|F'(t)\|_{X \rightarrow X }
+  \|A  F(t)\|_{X \rightarrow X }\le c
,\quad\forall\,t\in(0,T]$ ,
\item[$\rm(ii)$] $F(t):X \rightarrow D$ is continuous with respect to $t\in[0,T]$, and $AF(0)=0$.
\item[$\rm(iii)$] $t^{1-\alpha}\|E(t)\|_{X \rightarrow X }
+t^{2-\alpha}\|E'(t)\|_{X \rightarrow X }
+t\|A  E(t)\|_{X \rightarrow X }
\le c , \quad\forall\,t\in(0,T]$.
\item[$\rm(iv)$]$E(t):X \rightarrow D \,\,\,\mbox{is continuous with respect to $t\in(0,T]$}.$
\end{itemize}
\end{lemma}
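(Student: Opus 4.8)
The plan is to estimate the contour integrals defining $F$ and $E$ in \eqref{eqn:EF} directly, using the two resolvent bounds of \eqref{eqn:resol} together with the freedom in the radius $\delta$ of the Hankel-type contour $\Gamma_{\theta,\delta}$ of \eqref{contour-Gamma}. First I would check, by Cauchy's theorem, that $F(t)$ and $E(t)$ do not depend on $\delta>0$: the integrands $e^{zt}z^{-1}(z^\alpha-A)^{-1}$ and $e^{zt}(z^\alpha-A)^{-1}$ are analytic on the partial annulus lying between $\Gamma_{\theta,\delta_1}$ and $\Gamma_{\theta,\delta_2}$ (which avoids $z=0$) and decay as $|z|\to\infty$ along $|\arg z|=\theta<\pi$. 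Then for a fixed $t\in(0,T]$ I would take $\delta=1/t$; on the rays $z=\rho e^{\pm{\rm i}\theta}$, $\rho\ge 1/t$, one has $|e^{zt}|=e^{\rho t\cos\theta}$ with $\cos\theta<0$, and on the arc $|z|=1/t$ one has $|e^{zt}|\le e$, so the substitution $\rho=u/t$ (resp.\ $z=e^{{\rm i}\psi}/t$) turns each piece of the integral into a $t$-independent scalar integral times an explicit power of $t$.

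Carrying this out delivers (i) and (iii). For the terms not involving $A$ I would insert $\|(z^\alpha-A)^{-1}\|_{X\to X}\le c|z|^{-\alpha}$ to get $\|F(t)\|\le c\int_{\Gamma_{\theta,1/t}}|e^{zt}||z|^{-1-\alpha}\,|\d z|\le ct^{\alpha}$ and $\|E(t)\|\le ct^{\alpha-1}$; differentiation under the integral sign (legitimate by the exponential decay on the rays) replaces $e^{zt}$ by $z^{k}e^{zt}$, which reproves $E=F'$ and gives $\|F'(t)\|\le ct^{\alpha-1}$ and $\|E'(t)\|\le ct^{\alpha-2}$. For the terms involving $A$ I would instead use the boundedness $\|A(z^\alpha-A)^{-1}\|_{X\to X}\le c$, giving $\|AF(t)\|\le c\int|e^{zt}||z|^{-1}\,|\d z|\le c$, $\|AE(t)\|\le ct^{-1}$, and $\|AE'(t)\|\le ct^{-2}$. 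Together with the norm equivalence \eqref{norm-D}, these are precisely the bounds in (i) and (iii).

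The continuity assertions on $(0,T]$ then follow from the derivative bounds: for $0<s\le t\le T$ one has $F(t)-F(s)=\int_s^t E(\xi)\,\d\xi$, $AF(t)-AF(s)=\int_s^t AE(\xi)\,\d\xi$, $E(t)-E(s)=\int_s^t E'(\xi)\,\d\xi$ and $AE(t)-AE(s)=\int_s^t AE'(\xi)\,\d\xi$, with integrable integrands on every $[s,t]\subset(0,T]$; invoking \eqref{norm-D} this gives continuity of $F$ and $E$ into $D$ on $(0,T]$, in particular (iv). Here $t=0$ must be excluded for $E$, consistently with $\|AE(t)\|\le ct^{-1}$.

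The delicate point, which I expect to be the main obstacle, is the continuity of $F(\cdot)$ into $D$ \emph{up to} $t=0$ together with $AF(0)=0$: since $\|AE(\xi)\|$ is only $O(\xi^{-1})$ and hence not integrable at $0$, the naive relation $AF(t)=AF(0)+\int_0^t AE(\xi)\,\d\xi$ is unavailable. I would argue instead as follows. The integral $F(0):=\frac{1}{2\pi{\rm i}}\int_{\Gamma_{\theta,\delta}}z^{-1}(z^\alpha-A)^{-1}\,\d z$ converges (integrand $O(|z|^{-1-\alpha})$ at infinity), is $\delta$-independent by the argument above, and tends to $0$ as $\delta\to\infty$; hence $F(0)=0$, so $AF(0)=0$, and $F(t)\to F(0)=0$ in operator norm because $\|F(t)\|\le ct^{\alpha}$. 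To see that $AF(t)\to AF(0)=0$ as well — necessarily only in the strong sense, since $\|AF(t)\|_{X\to X}\le c$ need not decay — I would use the identity $z^{-1}A(z^\alpha-A)^{-1}=z^{\alpha-1}(z^\alpha-A)^{-1}-z^{-1}I$ together with $\frac{1}{2\pi{\rm i}}\int_{\Gamma_{\theta,\delta}}e^{zt}z^{-1}\,\d z=1$ for $t>0$ to write $AF(t)=\frac{1}{2\pi{\rm i}}\int_{\Gamma_{\theta,\delta}}e^{zt}z^{\alpha-1}(z^\alpha-A)^{-1}\,\d z-I$; applied to $y\in D$, the relation $A(z^\alpha-A)^{-1}y=(z^\alpha-A)^{-1}Ay$ supplies an extra factor $|z|^{-\alpha}$, so $\|AF(t)y\|_X\le ct^{\alpha}\|Ay\|_X\to 0$, and the uniform bound $\|AF(t)\|_{X\to X}\le c$ plus density of $D$ in $X$ extends this to all $y\in X$; this proves (ii). Finally I would observe that every estimate above uses only the sectorial resolvent bound \eqref{eqn:resol}, whose constant is independent of $h$ for both $(A,X)=(\Delta,L^2(\Omega))$ and $(A,X)=(\Delta_h,L^2_h(\Omega))$, so the conclusions of Lemma~\ref{lem:smoothing} hold uniformly in the two concrete realizations, as needed in Section~\ref{sec:error}.
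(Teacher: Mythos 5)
Your estimates for (i) and (iii) follow essentially the same route as the paper: take $\delta=1/t$ in $\Gamma_{\theta,\delta}$, use the resolvent bounds \eqref{eqn:resol} (equivalently the identity $A(z^\alpha-A)^{-1}=-I+z^\alpha(z^\alpha-A)^{-1}$), and rescale; your deduction of (iv) and of continuity on $(0,T]$ from the derivative bounds and \eqref{norm-D} also matches. The genuine difference is part (ii). The paper does not prove it directly: it observes that with $f\equiv 0$ the representation \eqref{re-form-nonlinear} reduces to the homogeneous linear problem, and imports $\Delta F(\cdot)\colon L^2(\Omega)\to L^2(\Omega)\in C[0,T]$ and $\Delta F(0)=0$ from \cite[Theorem 2.1]{SakamotoYamamoto:2011}, remarking that the discrete case is ``similar.'' You instead give a self-contained argument: $F(0)=0$ by letting $\delta\to\infty$ in the $\delta$-independent contour integral, the commutation $AF(t)y=F(t)Ay$ for $y\in D$ giving $\|AF(t)y\|_X\le ct^\alpha\|Ay\|_X$, and then density of $D$ in $X$ together with the uniform bound $\|AF(t)\|_{X\to X}\le c$ to get strong continuity at $t=0$. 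This buys two things: it treats the continuous and semidiscrete realizations in one stroke (the cited theorem is stated only for the PDE case), and it makes explicit that the continuity of $AF(\cdot)$ at $t=0$ can only hold in the strong operator topology --- a point the paper leaves implicit but which is exactly what is needed later for the continuity of ${\mathcal I}_4$ in Step 3 of the proof of Theorem \ref{THM:Reg}, where $AF(t)$ is applied to the $t$-dependent, norm-continuous argument $Au_0+Pf(u(t))$ and one combines strong convergence with the uniform bound. The price is a slightly longer write-up; the content is correct.
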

\begin{proof}
First, consider (ii) in the case $X=L^2(\Omega)$, $D=H^1_0(\Omega)\cap H^2(\Omega)$ and $A=\Delta$.
By setting $f(u(t))\equiv 0$ and $A=\Delta$ in \eqref{re-form-nonlinear},
\cite[Theorem 2.1]{SakamotoYamamoto:2011} implies that $\Delta F(t)=F(t)\Delta: L^2(\Omega)\rightarrow L^2(\Omega)$ is continuous with respect to $t\in[0,T]$. Thus, $F(t):L^2(\Omega)\rightarrow H^1_0(\Omega)\cap H^2(\Omega)$ is continuous with respect to $t\in[0,T]$.
Then taking $t\rightarrow 0$ in \eqref{re-form-nonlinear} yields
$\Delta F(0)=0$.
This proves (ii) in the case $X=L^2(\Omega)$, $D=H^1_0(\Omega)\cap H^2(\Omega)$ and $A=\Delta$.
The proof for the case $X=L^2_h(\Omega)$, $D=H^2_h(\Omega)$ and $A=\Delta_h$ is similar.


For any integers $k\ge 0$ and $m=0,1$, by choosing $\delta=t^{-1}$ in the contour $\Gamma_{\theta,\delta}$
and using the identity $A(z^\alpha-A)^{-1}=-I+z^\alpha(z^\alpha-A)^{-1}$, the resolvent estimate \eqref{eqn:resol}, and change of variables $z=s\cos\varphi+{\rm i}s\sin\varphi$, we have (with $|\d z|$ being the arc length element of $\Gamma_{\theta,\delta}$)
\begin{align*}
\bigg\|A^{m}\frac{\d^k}{\d t^k}F(t)\bigg\|_{X\rightarrow X}
&=\bigg\|\frac{1}{2\pi {\rm i}}\int_{\Gamma_{\theta,\delta }}e^{zt} z^{k-1} A^{m}(z^\alpha-A )^{-1}\, \d z\bigg\|_{X\rightarrow X} \\
&\le c\int_{\Gamma_{\theta,\delta }}
e^{{\rm Re}(z)t} |z|^{k-1+(m-1)\alpha} \, |\d z| \\
&\le c|\cos\theta| \int_{\delta}^\infty e^{st\cos\theta} s^{k-1+(m-1)\alpha} \d s
+ c\int_{-\theta}^\theta e^{\cos\varphi} \delta^{k+(m-1)\alpha} \d \varphi   \\
&\le ct^{-(m-1)\alpha-k} .
\end{align*}
Since $E(t)=F^\prime(t)$, the last inequality yields (i) and (iii).
The continuity of $F(t):X \rightarrow D$ and $E(t):X \rightarrow D$  for $t\in(0,T]$
follows from the equivalent norm in \eqref{norm-D}, showing (iv).
\end{proof}

Now we are ready to present the proof of Theorem \ref{THM:Reg}.\smallskip

{\it Proof of Theorem \ref{THM:Reg}.}$\,\,$ The proof is divided into four steps.

\noindent{\it Step 1: Existence and uniqueness.}
We denote by $C([0,T];X)_\lambda$ the function space $C([0,T];X)$ equipped with the following weighted norm:
$$
\|v\|_{\lambda}:=
\max_{0\le t\le T}\|e^{-\lambda t}v(t)\|_X ,\quad\forall\, v\in C([0,T];X),
$$
which is equivalent to the standard norm of $C([0,T];X)$
for any fixed parameter $\lambda>0$.
Then we define a nonlinear map
$M:C([0,T];X )_\lambda\rightarrow C([0,T];X )_\lambda$ by
\begin{equation*}
Mv(t)=u_0+F(t)A  u_0+\int_0^t E(t-s) P f(v(s)) \d s .
\end{equation*}
For any $\lambda>0$,
$u\in C([0,T];X)$ is a solution of \eqref{re-form-nonlinear} if and only if $u$ is a fixed point of
the map $M:C([0,T];X )_\lambda\rightarrow C([0,T];X )_\lambda$. It remains to prove that for some $\lambda>0$,
the map $M:C([0,T];X )_\lambda\rightarrow C([0,T];X )_\lambda$ has a unique fixed point.
In fact, the definition of $M$ and Lemma \ref{lem:smoothing}(iii) immediately yield
\begin{equation}\label{map-m-contr-1}
\begin{aligned}
&\quad \|e^{-\lambda t}(Mv_1(t)-Mv_2(t))\|_{X } \\
&=\bigg\|e^{-\lambda t}\int_0^t E(t-s)(Pf(v_1(s))-Pf(v_2(s)))\d s\bigg\|_{X }\\
&\le ce^{-\lambda t}\int_0^t(t-s)^{\alpha-1}\|v_1(s)-v_2(s)\|_{X}\d s\\
&\le c\int_0^t(t-s)^{\alpha-1}e^{-\lambda(t-s)}
\max_{s\in[0,T]}\|e^{-\lambda s}(v_1(s)-v_2(s))\|_{X}\d s\\
&= c\lambda^{-\alpha}  \bigg(\int_0^1(1-\theta)^{\alpha-1}
(\lambda t)^\alpha e^{-\lambda t(1-\theta)}\d \theta\bigg) \|v_1-v_2\|_{\lambda} \quad\mbox{(change of variable $s=t\theta$)} \\
&\le c\sup_{\begin{subarray}{c}
\lambda>0,T\geq t>0\\
\theta\in[0,1]
\end{subarray}}
\Big([\lambda t(1-\theta)]^{\alpha/2}e^{-\lambda t(1-\theta)}\Big)
(t/\lambda)^{\alpha/2}  \bigg(\int_0^1(1-\theta)^{\alpha/2-1}
\d\theta\bigg) \|v_1-v_2\|_{\lambda} \\
&\le c(T/\lambda)^{\alpha/2} \|v_1-v_2\|_{\lambda} ,
\quad\forall\, v_1,v_2\in C([0,T];X )_\lambda .
\end{aligned}
\end{equation}
By choosing a sufficiently large $\lambda$,
the last inequality implies
 \begin{equation*}
\|e^{-\lambda t}(Mv_1(t)-Mv_2(t))\|_{X }
\le \tfrac{1}{2} \|v_1-v_2\|_{\lambda} ,
\quad\forall\, v_1,v_2\in C([0,T];X )_\lambda .
\end{equation*}
Hence, the map $M$ is contractive on the space $C([0,T];X)_\lambda$.
The Banach fixed point theorem implies that $M$ has a unique fixed point,
which is also the unique solution of \eqref{re-form-nonlinear}.

\medskip

\noindent{\it Step 2: $C^\alpha([0,T];X)$ regularity.}
Consider the difference quotient for $h>0$
\begin{align}\label{Holder-u}
\begin{aligned}
\frac{u(t+h)-u(t)}{h^\alpha}
&=\frac{F(t+h)-F(t)}{h^\alpha} A  u_0
+\frac{1}{h^\alpha}\int_{t}^{t+h} E(s)Pf(u(t-s))\d s \\
&\quad + \int_0^{t} E(s)\frac{Pf(u(t+h-s))-Pf(u(t-s))}{h^\alpha}\d s =:\sum_{i=1}^3{\mathcal I}_i(t,h) . 
\end{aligned}
\end{align}
A simple consequence of Lemma \ref{lem:smoothing}(i) is that
$h^{-\alpha} \|{F(t+h)-F(t)}\|_{X \rightarrow X } \le c ,$
which implies $\|{\mathcal I}_1(t,h)\|_{X }\le c$.
By appealing to  Lemma \ref{lem:smoothing}(iii), we have
\begin{align*}
\|{\mathcal I}_2(t,h)\|_{X }
&=\bigg\|\frac{1}{h^\alpha}\int_{t}^{t+h} E(s)Pf(u(t-s))\d s\bigg\|_{X } \\
&\le c\frac{1}{h^\alpha} \int_{t}^{t+h}s^{\alpha-1}\d s= \frac{c}{\alpha} \frac{(t+h)^\alpha-t^\alpha}{h^\alpha} \le c.
\end{align*}
By the Lipschitz continuity of $f$, we have
\begin{align*}
e^{-\lambda t}\|{\mathcal I}_3(t,h)\|_{X }
&=\bigg\|e^{-\lambda t}\int_0^{t} E(t-s)\frac{Pf(u(s+h))-Pf(u(s))}{h^\alpha}\d s\bigg\|_{X } \\
&\le c_1 \int_0^{t}e^{-\lambda (t-s)} (t-s)^{\alpha-1}e^{-\lambda s}\bigg\|\frac{u(s+h)-u(s)}{h^\alpha}\bigg\|_{X }\d s  .
\end{align*}
By substituting the estimates of ${\mathcal I}_i(t,h)$, $i=1,2,3$, into \eqref{Holder-u} and
denoting $ W_h(t)= e^{-\lambda t}h^{-\alpha}\|{u(t+h)-u(t)}\|_{X } ,$ we obtain
\begin{equation*}
W_h(t)\le c+c_1\int_0^{t} e^{-\lambda (t-s)}(t-s)^{\alpha-1} W_h(s)\d s
\le c+c_1(T/\lambda)^{\frac{\alpha}{2}}
\max_{s\in[0,T]}W_h(s) ,
\end{equation*}
where the last inequality can be derived in the same way as \eqref{map-m-contr-1}. By choosing
a sufficiently large $\lambda$ and taking maximum of the left-hand side with respect to
$t\in[0,T]$, it implies $\displaystyle\max_{t\in[0,T]}W_h(t)\le c $, which further yields
$$h^{-\alpha}\|{u(t+h)-u(t)}\|_{X }\le ce^{\lambda t}\le c ,$$
where the constant $c$ is independent of $h$. Thus, we have proved $\|u\|_{C^\alpha([0,T];X )}
\le c.$\medskip

\noindent{\it Step 3: $C([0,T];D)$ regularity.} By applying the operator $A $ to both sides of
\eqref{re-form-nonlinear} and using the identity $AF(t)=\int_0^tAE(t-s)\d s$, cf. Lemma \ref{lem:smoothing},
we obtain
\begin{align} \label{Delta-u-reg}
\begin{aligned}
\quad A  u(t)-A  u_0
&=A  F(t)A  u_0+\int_0^t A  E(t-s) P  f(u(s)) \d s \\
&= A  F(t)\left(A  u_0+P f(u(t))\right)+\int_0^t A  E(t-s)   (Pf(u(s)-Pf(u(t)))\d s\\
&={\mathcal I}_4(t)+{\mathcal I}_5(t) .
\end{aligned}
\end{align}
By Lemma \ref{lem:smoothing}(iii) and the $C^\alpha([0,T];X)$ regularity from Step 2, we have
\begin{align*}
\|{\mathcal I}_5(t)\|_{X }
&=\bigg\|\int_0^t A  E(t-s)   (Pf(u(s))-Pf(u(t)))\d s\bigg\|_{X } \\
&\le \int_0^t \frac{c\|u(s)-u(t)\|_{X } }{t-s} \d s
\le \int_0^t \frac{c|t-s|^\alpha }{t-s} \d s
\le ct^\alpha ,\quad\forall\, t\in(0,T] .
\end{align*}
Lemma \ref{lem:smoothing}(iv) implies that ${\mathcal I}_5(t)$ is continuous for $t\in(0,T]$,
and the last inequality implies that ${\mathcal I}_5(t)$ is also continuous at $t=0$.
Hence ${\mathcal I}_5 \in C([0,T];X )$.
Moreover, Lemma \ref{lem:smoothing}(ii) gives ${\mathcal I}_4\in C([0,T];X )$ and
\begin{equation*}
\|{\mathcal I}_4(t)\|_X\le c\|A  u_0+P f(u(t))\|_X \le c.
\end{equation*}
Substituting the estimates of ${\mathcal I}_4(t)$ and ${\mathcal I}_5(t)$ into \eqref{Delta-u-reg} yields
$\|Au\|_{C([0,T];X )}\le c$, which further implies $\|u\|_{C([0,T];D )}\le c .$
The regularity result $u\in C([0,T];D )$ together with \eqref{nonlinear-abstr} yields
$\partial_t^\alpha u=A  u+Pf(u)\in C([0,T];X ) .$\medskip

\noindent{\it Step 4: Estimate of $\|u'(t)\|_X$.} By differentiating
\eqref{re-form-nonlinear} with respect to $t$, we obtain
\begin{align*}
\begin{aligned}
u'(t)
&=F'(t)A  u_0+E(t)  P f(u_0)  +\int_0^t E(s) P  f'(u(t-s))u'(t-s) \d s \\
&=E(t)(A  u_0+ P f(u_0) ) +\int_0^t E(t-s)  P f'(u(s))u'(s) \d s .
\end{aligned}
\end{align*}
By multiplying this equation by $t^{1-\alpha}$, we get
\begin{align*}
\begin{aligned}
t^{1-\alpha}u'(t)
&=t^{1-\alpha}E(t)(A  u_0+P  f(u_0) ) +\int_0^t t^{1-\alpha}s^{\alpha-1} E(t-s) P  f'(u(s)) s^{1-\alpha} u'(s) \d s ,
\end{aligned}
\end{align*}
which together with the $L^\infty$ stability of $P_h$ \cite[Lemma 6.1]{Thomee:2006} directly implies that
\begin{align*}
\begin{aligned}
e^{-\lambda t}t^{1-\alpha}\|u'(t)\|_{X}
&\le e^{-\lambda t}t^{1-\alpha}\|E(t)\|_{X \rightarrow X }
\|A  u_0+ P f(u_0) \|_{X } \\
&\quad +\int_0^t e^{-\lambda (t-s)}t^{1-\alpha}s^{\alpha-1} (t-s)^{\alpha-1} \|Pf'(u(s))\|_{L^\infty\II}
e^{-\lambda s}s^{1-\alpha} \|u'(s)\|_{X } ds \\
&\le ce^{-\lambda t}\|A  u_0+ P f(u_0) \|_{X }
 + c (T/\lambda)^{\frac{\alpha}{2}}  \max_{s\in[0,T]}e^{-\lambda s}s^{1-\alpha}\| u'(s)\|_{X }  .
\end{aligned}
\end{align*}
where the last line follows similarly as \eqref{map-m-contr-1}.
By choosing a sufficiently large $\lambda$ and taking maximum of the left-hand
side with respect to $t\in[0,T]$, it implies
$\displaystyle\max_{t\in[0,T]}\|e^{-\lambda t} t^{1-\alpha} u'(t)\|_{X }
\le c $, which further yields \eqref{reg-PDE2}.
The proof of Theorem \ref{THM:Reg} is complete.
\endproof

\section{Error estimates}\label{sec:error}

Now, we derive error estimates for the numerical solutions of
problem \eqref{nonlinear-PDE} using the discrete Gr\"onwall's inequality from Section
\ref{sec:Gronwall} and discrete maximal $\ell^p$-regularity from \cite{JLZ}.
To illustrate the general framework for the numerical analysis of nonlinear time fractional
diffusion equations, we focus on the L1 scheme and backward Euler CQ. Other time
stepping schemes can be analyzed similarly. The convergence rates we show below are
sharp (up to a logarithmic factor) with respect to the solution regularity in Theorem
\ref{THM:Reg}, and also confirmed by the numerical experiments in Section \ref{sec:numerics}.

\subsection{Preliminaries on the linear problem}
First we recall some error estimates for the following linear subdiffusion equation:
\begin{equation}\label{PDEv-linear}
\partial_t^\alpha v(t)-\Delta  v(t)=g(t), \quad\,\,\forall t\in (0,T] ,
\end{equation}
where $g$ is a given function.
The semidiscrete FEM for \eqref{PDEv-linear}
seeks $v_h(t)\in S_h$ such that
\begin{equation}\label{eqn:semidiscrete-linear}
\partial_t^\alpha v_h(t)-\Delta_h v_h(t)=P_hg(t), \quad\,\,\forall t\in (0,T],
\end{equation}
with $v_h(0)=R_hv(0)$, and the fully discrete scheme seeks $v_h^n\in S_h$, $n=1,\dots,N$, such that
\begin{equation}\label{eqn:fully-linear}
\bar\partial_\tau^\alpha (v_h^n-v_h^0)
-\Delta_h v_h^n= P_hg(t_n),
\end{equation}
with $v_h^0=v_h(0)$,
where $\bar\partial_\tau^\alpha v_h^n$ denotes either the backward Euler CQ or the L1 scheme.

The semidiscrete solution $v_h$ satisfies the following error estimate \cite{JinLazarovZhou:SIAM2013,
JinLazarovPasciakZhou:IMA2014,JinLazarovZhou:SISC2016}.

\begin{lemma}[Semidiscrete solution of linear problems]\label{lem:space-error-linear}
For the semidiscrete solution $v_h$ to problem \eqref{eqn:semidiscrete-linear},
there holds with $\ell_h=\log(2+1/h)$
\begin{equation*}
\max_{t\in[0,T]}
\|v_h(t)-v(t)\|_{L^2(\Omega)} \leq ch^2\|v(0)\|_{H^2(\Omega)}+ ch^2\ell_h^2 \|g\|_{L^\infty(0,T;L^2(\Omega))}.\\[-10pt]
\end{equation*}
\end{lemma}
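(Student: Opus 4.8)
Since \eqref{PDEv-linear} is linear, the plan is to write $v=v^{(1)}+v^{(2)}$, where $v^{(1)}$ solves the homogeneous equation ($g\equiv0$) with initial value $v(0)$, and $v^{(2)}$ solves the equation with source $g$ and zero initial value; the semidiscrete solution splits accordingly, $v_h=v_h^{(1)}+v_h^{(2)}$ with $v_h^{(1)}(0)=R_hv(0)$ and $v_h^{(2)}(0)=0$. It then suffices to prove $\|v_h^{(1)}(t)-v^{(1)}(t)\|_{L^2(\Omega)}\le ch^2\|v(0)\|_{H^2(\Omega)}$ and $\|v_h^{(2)}(t)-v^{(2)}(t)\|_{L^2(\Omega)}\le ch^2\ell_h^2\|g\|_{L^\infty(0,T;L^2(\Omega))}$, uniformly in $t\in[0,T]$.

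Arguing as in Section~\ref{sec:regularity} but with $Pf(u)$ replaced by $g$, I would use the representations $v^{(1)}(t)=v(0)+F(t)\Delta v(0)$ and $v^{(2)}(t)=\int_0^tE(t-s)g(s)\,\d s$, with $F,E$ as in \eqref{eqn:EF} for $A=\Delta$, together with the analogous formulas for $v_h^{(1)},v_h^{(2)}$ in which $\Delta,I$ are replaced by $\Delta_h,P_h$ and $F,E$ by the operators $F_h,E_h$ defined by \eqref{eqn:EF} with $A=\Delta_h$. Using $\Delta_hR_hv(0)=P_h\Delta v(0)$, the homogeneous error equals $(R_hv(0)-v(0))+(F_h(t)P_h-F(t))\Delta v(0)$; this is the classical semidiscrete estimate for the homogeneous linear subdiffusion equation with data in $H^2(\Omega)\cap H_0^1(\Omega)$, which I would quote from \cite{JinLazarovZhou:SIAM2013}. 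Its proof combines the Ritz bound $\|R_hv(0)-v(0)\|_{L^2(\Omega)}\le ch^2\|v(0)\|_{H^2(\Omega)}$ with a contour estimate of $(F_h(t)P_h-F(t))\Delta v(0)=\frac{1}{2\pi\mathrm{i}}\int_{\Gamma_{\theta,\delta}}e^{zt}z^{-1}\big[(z^\alpha-\Delta_h)^{-1}P_h-(z^\alpha-\Delta)^{-1}\big]\Delta v(0)\,\d z$, for which the $H^2$-regularity of the shifted elliptic problem $z^\alpha w-\Delta w=\psi$ with a $z$-independent constant (a consequence of \eqref{eqn:resol}) keeps this part free of logarithmic factors.

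The inhomogeneous term is the crux. Write $v_h^{(2)}(t)-v^{(2)}(t)=\int_0^t[E_h(t-s)P_h-E(t-s)]g(s)\,\d s$ and $E_h(\sigma)P_h-E(\sigma)=\frac{1}{2\pi\mathrm{i}}\int_{\Gamma_{\theta,\delta}}e^{z\sigma}\big[(z^\alpha-\Delta_h)^{-1}P_h-(z^\alpha-\Delta)^{-1}\big]\,\d z$. I would then combine two bounds on this kernel: (a) the resolvent error estimate $\|(z^\alpha-\Delta_h)^{-1}P_h-(z^\alpha-\Delta)^{-1}\|_{L^2(\Omega)\to L^2(\Omega)}\le ch^2\ell_h$, valid uniformly for $z\in\Gamma_{\theta,\delta}$ on a convex polygon (from \cite{JinLazarovPasciakZhou:IMA2014,JinLazarovZhou:SISC2016}), which after choosing the contour radius $\delta=1/\sigma$ gives $\|E_h(\sigma)P_h-E(\sigma)\|_{L^2(\Omega)\to L^2(\Omega)}\le ch^2\ell_h\,\sigma^{-1}$; and (b) the smoothing bound $\|E_h(\sigma)P_h\|_{L^2(\Omega)\to L^2(\Omega)}+\|E(\sigma)\|_{L^2(\Omega)\to L^2(\Omega)}\le c\sigma^{\alpha-1}$ from Lemma~\ref{lem:smoothing}(iii) and its discrete counterpart. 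Splitting the $s$-integral at the scale $\sigma^\ast$ where the two bounds balance, i.e. $(\sigma^\ast)^\alpha\sim h^2\ell_h$, using (b) on $(t-\sigma^\ast,t)$ and (a) on $(0,t-\sigma^\ast)$, gives $\int_0^t\|E_h(t-s)P_h-E(t-s)\|_{L^2(\Omega)\to L^2(\Omega)}\,\d s\le c(\sigma^\ast)^\alpha+ch^2\ell_h\log(T/\sigma^\ast)\le ch^2\ell_h^2$, and taking $\|g\|_{L^\infty(0,T;L^2(\Omega))}$ out of the integral yields the claim; the second logarithm comes precisely from this time integration.

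The hard part will be item (a): the uniform-in-$z$ resolvent error estimate on a convex polygon with the sharp power $h^2$ and a single logarithmic factor. It rests on the $H^2$ elliptic regularity of the shifted problem with a $z$-independent constant, together with an Aubin--Nitsche duality argument on the polygon in which the $W^{1,\infty}$/$L^\infty$-type finite element bounds produce the $\ell_h$; this is exactly the step that forces the logarithm into the final rate. I would not reproduce it but cite the references above and devote the exposition to the contour and splitting arguments above.
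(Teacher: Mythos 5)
The paper offers no proof of this lemma: it is quoted directly from \cite{JinLazarovZhou:SIAM2013,JinLazarovPasciakZhou:IMA2014,JinLazarovZhou:SISC2016}, and your reconstruction --- homogeneous/inhomogeneous splitting, the Laplace-transform representation through $F$ and $E$, a uniform-in-$z$ resolvent error estimate, and balancing the two kernel bounds inside the convolution integral --- is essentially the argument of those references, so the approach matches and is correct. The only (harmless) inaccuracy is in the provenance of the logarithms: on a convex domain the resolvent error bound is $O(h^2)$ by a plain $L^2$ duality argument, with no $W^{1,\infty}$ machinery and no $\ell_h$, so the logarithmic factors enter through the homogeneous nonsmooth-data estimate and the time-integration/balancing step rather than through your item (a); your version is merely slightly generous and still yields the stated $ch^2\ell_h^2$ bound.
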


The solution $v_h^n$ of the fully discrete scheme \eqref{eqn:fully-linear} satisfies
the following error estimate.
For the backward Euler CQ, it was proved in \cite[Theorems 3.5 and 3.6]{JinLazarovZhou:SISC2016},
while the proof for the L1 scheme will be given in Section \ref{app:time-error}.
\begin{lemma}[Fully discrete solutions of linear problems]\label{lem:time-error-linear}
For the fully discrete solutions $v_h^n$ to problem \eqref{eqn:fully-linear} with the L1
scheme or backward Euler CQ, there holds
\begin{equation*}
\begin{aligned}
\|v_h(t_n)-v_h^n\|_{L^2(\Omega)}
\le &\,  c\tau t_n^{\alpha-1}( \| \Delta v(0) \|_{L^2(\Omega)}+ \|g(0)\|_{L^2(\Omega)}) +
c\tau\int_0^{t_n}(t_n-s)^{\alpha-1}\|g'(s)\|_{L^2(\Omega)}\d s  . \\[-5pt]
\end{aligned}
\end{equation*}
\end{lemma}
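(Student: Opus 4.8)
The plan is to prove the estimate for the L1 scheme (the backward Euler CQ case being \cite[Theorems 3.5 and 3.6]{JinLazarovZhou:SISC2016}) by the Laplace-transform and contour-integral technique, adapted to the generating function \eqref{generating-L1}. Since \eqref{eqn:semidiscrete-linear} and \eqref{eqn:fully-linear} are linear and share the initial value $v_h^0=v_h(0)=R_hv(0)$, I would represent each solution by an inversion formula built on the $h$-uniform resolvent bounds \eqref{eqn:resol} for $\Delta_h$: the semidiscrete solution is $v_h(t)=v_h(0)+F(t)\Delta_hv_h(0)+\int_0^tE(t-s)P_hg(s)\,\d s$ with $F$, $E$ as in \eqref{eqn:EF} (with $A=\Delta_h$), while \eqref{generating-frac}, Cauchy's formula, and the substitution $\zeta=e^{-z\tau}$ turn the fully discrete solution into the analogous formula in which $z^\alpha$ is replaced by the symbol $K(e^{-z\tau})$, the kernel $z^{-1}$ by $\tau e^{-z\tau}/(1-e^{-z\tau})$, the Laplace transform of the source by the Riemann sum $\tau\sum_{j\ge1}e^{-zt_j}P_hg(t_j)$, and the contour $\Gamma_{\theta,\delta}$ by its truncation $\Gamma_{\theta,\delta}^\tau$ to $|{\rm Im}\,z|\le\pi/\tau$; the truncation is legitimate because $e^{-z\tau}$ is periodic in $z$ and, by \eqref{generating-L1}, $K(\zeta)$ converges and stays in a sector on $\partial\mathbb D\setminus\{1\}$.

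Subtracting the two representations, $v_h(t_n)-v_h^n$ is the sum of the tail of the continuous contour over $|{\rm Im}\,z|>\pi/\tau$ — which, since ${\rm Re}\,z<0$ there, decays exponentially in $n$ and, after using $\|(z^\alpha-\Delta_h)^{-1}\|\le c|z|^{-\alpha}$ on $\Delta_hv_h(0)\in L^2(\Omega)$, is dominated by $c\tau t_n^{\alpha-1}\|\Delta_hv_h(0)\|_{L^2(\Omega)}$ — plus the integral over $\Gamma_{\theta,\delta}^\tau$ of the difference of the integrands. The crucial new ingredient for the latter is the consistency estimate for the L1 symbol, $|K(e^{-z\tau})-z^\alpha|\le c\,\tau^{2-\alpha}|z|^{2}$ uniformly on $\Gamma_{\theta,\delta}^\tau$, which I would extract from the Flajolet asymptotics of $\mathrm{Li}_{\alpha-1}$ already used in the proof of Theorem~\ref{L1-CQ-gronwall} (noting the cancellation $(1-e^{-w})^2e^{w}=w^2+O(w^4)$). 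Feeding this, the elementary estimate $|\tau e^{-z\tau}/(1-e^{-z\tau})-z^{-1}|\le c\tau$, the lower bound $|K(e^{-z\tau})|\ge c|z|^\alpha$ from Theorem~\ref{L1-CQ-gronwall} (which yields $\|(K(e^{-z\tau})-\Delta_h)^{-1}\|\le c|z|^{-\alpha}$), and the resolvent identity $(z^\alpha-\Delta_h)^{-1}-(K(e^{-z\tau})-\Delta_h)^{-1}=(z^\alpha-\Delta_h)^{-1}(K(e^{-z\tau})-z^\alpha)(K(e^{-z\tau})-\Delta_h)^{-1}$ into the representation, and choosing $\delta=1/t_n$, the contribution of the initial value is bounded by $c\tau t_n^{\alpha-1}\|\Delta_hv_h(0)\|_{L^2(\Omega)}$; since $\|\Delta_hR_hv(0)\|_{L^2(\Omega)}=\|P_h\Delta v(0)\|_{L^2(\Omega)}\le\|\Delta v(0)\|_{L^2(\Omega)}$, this gives the first part of the claim.

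For the source term I would split $g(s)=g(0)+\int_0^sg'(r)\,\d r$. The constant part contributes $[F(t_n)-F_n^\tau]P_hg(0)$, where $F_n^\tau$ is the discrete analogue of $F(t_n)=\int_0^{t_n}E(\sigma)\,\d\sigma$; this has exactly the structure just treated and is bounded by $c\tau t_n^{\alpha-1}\|g(0)\|_{L^2(\Omega)}$. For the part carrying $g'$, a Fubini interchange rewrites both $\int_0^{t_n}E(t_n-s)\int_0^sg'(r)\,\d r\,\d s$ and its discrete counterpart as a single convolution of $g'(r)$ against $F(t_n-r)$, respectively its discrete analogue, whose kernel error over a length-$\sigma$ interval is $O(\tau\sigma^{\alpha-1})$ by the same symbol estimate; integrating the weight $\tau(t_n-r)^{\alpha-1}$ against $\|g'(r)\|_{L^2(\Omega)}$ gives the last term of the estimate. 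I expect the main obstacle to be exactly the symbol-consistency estimate together with the bookkeeping of the contour truncation: whereas for backward Euler CQ one has the explicit identity $\tau^\alpha K(e^{-z\tau})=(1-e^{-z\tau})^\alpha$, for L1 one must extract the $O(\tau^{2-\alpha})$ remainder from the polylogarithm expansion and check that all intermediate factors $|z|\tau$ remain controlled along $\Gamma_{\theta,\delta}^\tau$, which is precisely where the estimates of Theorem~\ref{L1-CQ-gronwall} enter.
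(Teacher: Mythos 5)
Your proposal is correct and follows essentially the same route as the paper: the paper also reduces to the L1 case, uses the Laplace-transform/generating-function representation with the truncated contour $\Gamma_{\theta,\delta}^\tau$, the symbol bounds $|\mu(e^{-z\tau})-z|\le c\tau|z|^2$ and $|\beta_\tau(e^{-z\tau})-z^\alpha|\le c\tau^{2-\alpha}|z|^2$ (quoted from the earlier L1 analysis, i.e.\ exactly your polylogarithm consistency estimate), the choice $\delta\le 1/t_n$ for the initial-value and constant-source contribution, and the splitting $g=g(0)+1*g'$ with the error written as a convolution of $g'$ against the kernel $(E-E_{\tau,\epsilon})*1$. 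The only point you compress is the verification that this kernel bound $O(\tau t^{\alpha-1})$ holds at \emph{off-grid} times $t$, which the paper handles by a short Taylor expansion of $(E-E_{\tau,\epsilon})*1$ about the nearest grid point $t_n$ together with the bounds on $\int_t^{t_n}E(s)\,\mathrm{d}s$ and its discrete counterpart; this is a minor bookkeeping step, not a gap in the approach.
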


\begin{remark}\label{rem:initial}{\upshape
If $1\le d\le 3$ and $v(0)\in H^1_0(\Omega)\cap H^2(\Omega)$, then the error estimates
in Lemmas \ref{lem:space-error-linear} and \ref{lem:time-error-linear}
are still valid if $v_h(0)$ is the Lagrange interpolation of $v(0)$,
due to the smoothing property of the solution operator \cite[Lemma 3.1]{JinLazarovZhou:SIAM2013}.
Consequently, all the results in Section \ref{ssub:nonlinear-error} remain valid
in this case.}
\end{remark}

Lemmas \ref{lem:space-error-linear} and \ref{lem:time-error-linear}
will be used below in the analysis of the nonlinear problem.

\subsection{Error estimates for the nonlinear problem}\label{ssub:nonlinear-error}
Now we can present error estimates for problem \eqref{nonlinear-PDE}.
Like in the linear case, we discuss the spatial error and temporal
error separately. First, we derive the spatial discretization error.

\begin{theorem}\label{THM:Error-1}
Let $u_0\in H_0^1(\Omega)\cap H^2(\Omega)$, and $f:{\mathbb R}\rightarrow {\mathbb R}$ be Lipschitz continuous.
Then the semidiscrete problem \eqref{nonlinear-FEM} has a unique solution
$u_h\in C([0,T];L_h^2(\Omega))$, which satisfies
\begin{align}\label{error-estimate-FEM}
 \max_{0\leq t\leq T}\|u(t)-u_h(t)\|_{L^2(\Omega)}\le c\ell_h^2 h^2 .
\end{align}
\end{theorem}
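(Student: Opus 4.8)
The plan is a comparison argument. I would introduce the semidiscrete solution $w_h\in S_h$ of the \emph{linear} problem $\partial_t^\alpha w_h-\Delta_h w_h=P_h f(u(t))$ for $t\in(0,T]$, $w_h(0)=R_h u_0$, driven by the exact nonlinear solution $u$, and split $u-u_h=(u-w_h)+(w_h-u_h)$. Existence, uniqueness and all the regularity I need for $u_h$ (and for $u$) come directly from Theorem \ref{THM:Reg} applied to the abstract problem \eqref{nonlinear-abstr} with $(X,D,A,u,P,u_0)=(L^2_h(\Omega),H^2_h(\Omega),\Delta_h,u_h,P_h,R_h u_0)$; in fact I only need the modest fact $u\in C([0,T];L^2(\Omega))$, which by the Lipschitz bound $\|f(u(t))\|_{L^2(\Omega)}\le c\,(1+\|u(t)\|_{L^2(\Omega)})$ yields $\|f(u)\|_{L^\infty(0,T;L^2(\Omega))}\le c$, together with $u_0\in H^2(\Omega)$.

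For the first term, Lemma \ref{lem:space-error-linear} with data $v(0)=u_0$ and source $g=f(u)$ gives at once $\max_{0\le t\le T}\|u(t)-w_h(t)\|_{L^2(\Omega)}\le ch^2\|u_0\|_{H^2(\Omega)}+ch^2\ell_h^2\|f(u)\|_{L^\infty(0,T;L^2(\Omega))}\le c\ell_h^2 h^2=:\rho$. For the second term, set $e_h:=u_h-w_h\in S_h$; subtracting the two equations shows $e_h$ solves $\partial_t^\alpha e_h-\Delta_h e_h=P_h(f(u_h)-f(u))$ with $e_h(0)=0$, so by the variation-of-constants representation underlying \eqref{Int-Eq-w} (in $L^2_h(\Omega)$, with vanishing initial value and $A=\Delta_h$) one has $e_h(t)=\int_0^t E_h(t-s)P_h\bigl(f(u_h(s))-f(u(s))\bigr)\,\d s$, where $E_h$ is the operator $E$ of \eqref{eqn:EF} built from $A=\Delta_h$. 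The smoothing estimate of Lemma \ref{lem:smoothing}(iii) (valid for $(L^2_h(\Omega),\Delta_h)$ with an $h$-independent constant), the $L^2$-stability of $P_h$, the Lipschitz property of $f$, and $u_h-u=e_h+(w_h-u)$ then give $\|e_h(t)\|_{L^2(\Omega)}\le cL\int_0^t(t-s)^{\alpha-1}\|u_h(s)-u(s)\|_{L^2(\Omega)}\,\d s\le cL\int_0^t(t-s)^{\alpha-1}\bigl(\|e_h(s)\|_{L^2(\Omega)}+\rho\bigr)\,\d s$.

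It remains to close this weakly singular Gr\"onwall inequality for $e_h$, which I would do exactly as in the weighted-norm manipulation \eqref{map-m-contr-1}: multiplying by $e^{-\lambda t}$ and writing $W(t)=e^{-\lambda t}\|e_h(t)\|_{L^2(\Omega)}$ gives $W(t)\le c(T/\lambda)^{\alpha/2}\max_{s\in[0,T]}W(s)+c\rho$; choosing $\lambda$ so large that $c(T/\lambda)^{\alpha/2}\le\tfrac12$ and taking the maximum over $t$ yields $\|e_h\|_{C([0,T];L^2(\Omega))}\le c\rho$. The triangle inequality $\|u-u_h\|_{C([0,T];L^2(\Omega))}\le\|u-w_h\|_{C([0,T];L^2(\Omega))}+\|e_h\|_{C([0,T];L^2(\Omega))}\le c\rho=c\ell_h^2 h^2$ then gives \eqref{error-estimate-FEM}. (Alternatively, one could verify the hypothesis of the continuous fractional Gr\"onwall inequality Theorem \ref{Frac-Gronwall} for $X=L^2(\Omega)$ by invoking maximal $L^p(0,T;L^2(\Omega))$-regularity of $\Delta_h$ with an $h$-independent constant to dominate $\|\partial_t^\alpha e_h\|_{L^p(0,s;L^2(\Omega))}$ by $cL\|u_h-u\|_{L^p(0,s;L^2(\Omega))}$ for any fixed $p>1/\alpha$, and conclude as above.) I do not expect a genuine obstacle: the only non-elementary ingredient is the nonsmooth linear error bound of Lemma \ref{lem:space-error-linear}, which supplies the $\ell_h^2$ factor and is taken as given; the remaining work is the choice of the comparison function $w_h$ and the observation that $u\in C([0,T];L^2(\Omega))$ already renders the effective source $f(u)$ bounded in $L^2(\Omega)$, so that no extra regularity or compatibility condition is needed.
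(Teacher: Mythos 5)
Your proposal is correct, and its skeleton coincides with the paper's: the same auxiliary linear semidiscrete problem driven by $P_hf(u(t))$ with initial value $R_hu_0$, the same appeal to Lemma \ref{lem:space-error-linear} to get the $c\ell_h^2h^2$ bound for $u-w_h$, and the same zero-initial-data equation for the remainder. Where you diverge is in how you close the estimate for the remainder $e_h=u_h-w_h$. The paper bounds $\|\partial_t^\alpha\rho_h\|_{L^p(0,s;L^2(\Omega))}$ by $c\|u-u_h\|_{L^p(0,s;L^2(\Omega))}$ via the maximal $L^p$-regularity of fractional evolution equations (Bazhlekova) and then invokes its own continuous fractional Gr\"onwall inequality, Theorem \ref{Frac-Gronwall} --- a deliberate choice, since showcasing that Gr\"onwall machinery is one of the points of the paper. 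You instead write $e_h$ through the Duhamel representation underlying \eqref{Int-Eq-w}, use the $h$-uniform smoothing bound $\|E(t)\|_{X\to X}\le ct^{\alpha-1}$ from Lemma \ref{lem:smoothing}(iii) together with the Lipschitz property of $f$ to obtain a weakly singular integral inequality, and close it with the exponential-weight trick of \eqref{map-m-contr-1}. This is more elementary and self-contained (no maximal regularity, no Theorem \ref{Frac-Gronwall}), and it is legitimate here because the equivalence of \eqref{nonlinear-abstr} with its integral formulation \eqref{re-form-nonlinear} is already established in Section \ref{sec:regularity} and the relevant constants are independent of $h$; what it gives up is the generality of the abstract Gr\"onwall route, which extends to settings where an explicit resolvent-based kernel estimate is not available or where one only controls the right-hand side in an $L^p$-in-time sense. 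Your closing remark correctly identifies the paper's route as the alternative. No gaps.
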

\begin{proof}
By Theorem \ref{THM:Reg}, the existence and uniqueness of the solution $u_h$ hold.
It remains to establish the estimate \eqref{error-estimate-FEM}.
To this end, we define $v_h(t)$ as the solution of
\begin{equation*}
    \partial_t^\al v_h(t) - \Delta_h v_h(t) = P_h f(u(t)), \quad \text{with}\quad v_h(0)=u_h(0)=R_h u_0.
\end{equation*}
This together with Lemma \ref{lem:space-error-linear} yields the following estimate for $t \ge 0$
\begin{equation}\label{eqn:wh}
\begin{split}
\| (u-v_h)(t) \|_{L^2(\Omega)}
    \le ch^2 \| u(0)  \|_{H^2(\Omega)} + ch^2 \ell_h^2 \|  f(u) \|_{L^\infty(0,T;L^2\II)} \le ch^2\ell_h^2.
\end{split}
\end{equation}
Meanwhile, we note that $\rho_h:=v_h-u_h$ satisfies the following equation
\begin{equation*}
    \partial_t^\al \rho_h(t) -\Delta \rho_h(t) = P_h f (u(t)) - P_h f(u_h(t)), \quad \text{with}\quad \rho_h(0)=0.
\end{equation*}
Then, by the Lipschitz continuity of $f$ and the maximal $L^p$-regularity of fractional
evolution equations \cite[Corollary 1]{Bazhlekova:2002}, we obtain the following estimate
for any $p\in(1,\infty)$
\begin{equation*}
\begin{split}
    \|  \partial_t^\al \rho_h  \|_{L^p(0,T; L^2\II)}
     &\le c \|  P_h f (u ) - P_h f(u_h ) \|_{L^p(0,T; L^2\II)}\\
     & \le c \| u-u_h \|_{L^p(0,T;L^2\II)}\\
     &\le c\| u-v_h \|_{L^p(0,T;L^2\II)} + c \| \rho_h \|_{L^p(0,T;L^2\II)}\\
     &\le c h^2\ell_h^2 +c\| \rho_h \|_{L^p(0,T;L^2\II)}  .
\end{split}
\end{equation*}
Then by the fractional Gr\"onwall's inequality in Theorem \ref{Frac-Gronwall}, we have
\begin{equation*}
\max_{t\in[0,T]}\| \rho_h(t) \|_{L^2\II} \le c h^2\ell_h^2.
\end{equation*}
This and \eqref{eqn:wh} directly imply the desired result.
\end{proof}

Next we give the temporal discretization error.
\begin{theorem}\label{THM:Error-2}
Let $u_0\in H_0^1(\Omega)\cap H^2(\Omega)$, and $f:{\mathbb R}\rightarrow {\mathbb R}$ be Lipschitz continuous. Then the fully discrete scheme \eqref{TD-scheme}, with either the L1
scheme or backward Euler CQ for time discretization, has a unique solution $u_h^n\in S_h$,
$n=1,\dots,N$, and the solutions satisfy
\begin{equation}\label{error-estimate}
\max_{1\le n\le N}  \|u_h(t_n)-u_h^n\|_{L^2(\Omega)}\le c\tau^\alpha .
\end{equation}
\end{theorem}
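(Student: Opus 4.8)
The plan is to follow the structure of the proof of Theorem~\ref{THM:Error-1}, but now comparing the semidiscrete solution $u_h$ with the fully discrete solution $u_h^n$, using the \emph{discrete} tools of Section~\ref{sec:Gronwall} together with the discrete maximal $\ell^p$-regularity of \cite{JLZ} in place of their continuous counterparts. Existence and uniqueness of $u_h^n$ are immediate: since the nonlinear term $P_hf(u_h^{n-1})$ in \eqref{TD-scheme} involves only the previous step, each step amounts to inverting $\tau^{-\alpha}K_0-\Delta_h$ on $S_h$, which is boundedly invertible because $K_0>0$ and $-\Delta_h\ge 0$. So the content is the error bound \eqref{error-estimate}.

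First I would introduce the fully discrete \emph{linear} problem
\[
\bar\partial_\tau^\alpha(w_h^n-w_h^0)-\Delta_h w_h^n=P_hf(u_h(t_n)),\qquad w_h^0=R_hu_0,
\]
whose continuous counterpart, the semidiscrete equation with source $g(t)=P_hf(u_h(t))$, is solved exactly by $u_h$ itself. Hence Lemma~\ref{lem:time-error-linear} applies with this $g$. Using $\|\Delta_h R_hu_0\|_{L^2(\Omega)}=\|P_h\Delta u_0\|_{L^2(\Omega)}\le c$, $\|g(0)\|_{L^2(\Omega)}\le c$, and $\|g'(s)\|_{L^2(\Omega)}\le L\|u_h'(s)\|_{L^2(\Omega)}\le cs^{\alpha-1}$ by \eqref{reg-FEM2}, together with $\int_0^{t_n}(t_n-s)^{\alpha-1}s^{\alpha-1}\,\d s=B(\alpha,\alpha)\,t_n^{2\alpha-1}$, I obtain $\|u_h(t_n)-w_h^n\|_{L^2(\Omega)}\le c\tau t_n^{\alpha-1}+c\tau t_n^{2\alpha-1}\le c\tau^\alpha$ uniformly in $1\le n\le N$ (the worst case is $t_n=\tau$, and $\tau^{2\alpha}\le\tau^\alpha$).

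Next I would set $\rho_h^n:=w_h^n-u_h^n$, so that $\rho_h^0=0$ and
\[
\bar\partial_\tau^\alpha(\rho_h^n-\rho_h^0)-\Delta_h\rho_h^n=P_hf(u_h(t_n))-P_hf(u_h^{n-1})=:G^n.
\]
Splitting $G^n=\big[P_hf(u_h(t_n))-P_hf(u_h(t_{n-1}))\big]+\big[P_hf(u_h(t_{n-1}))-P_hf(u_h^{n-1})\big]$, the Lipschitz bound on $f$, the Hölder estimate $\|u_h(t_n)-u_h(t_{n-1})\|_{L^2(\Omega)}\le\int_{t_{n-1}}^{t_n}\|u_h'(s)\|_{L^2(\Omega)}\,\d s\le c(t_n^\alpha-t_{n-1}^\alpha)\le c\tau^\alpha$, and the decomposition $u_h(t_{n-1})-u_h^{n-1}=(u_h(t_{n-1})-w_h^{n-1})+\rho_h^{n-1}$ with the bound from the previous paragraph yield $\|G^n\|_{L^2(\Omega)}\le c\tau^\alpha+c\|\rho_h^{n-1}\|_{L^2(\Omega)}$. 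Since $L^2_h(\Omega)$ is a UMD space, discrete maximal $\ell^p$-regularity \cite{JLZ} applied to the $\rho_h^n$ equation gives, for any $p\in(1,\infty)$ and $1\le m\le N$,
\[
\|(\bar\partial_\tau^\alpha\rho_h^n)_{n=1}^m\|_{\ell^p(L^2(\Omega))}\le c\|(G^n)_{n=1}^m\|_{\ell^p(L^2(\Omega))}\le c\tau^\alpha+c\|(\rho_h^n)_{n=1}^{m-1}\|_{\ell^p(L^2(\Omega))},
\]
where I used $\|(1)_{n=1}^m\|_{\ell^p}\le T^{1/p}$ and $\rho_h^0=0$.

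Finally, fixing $p\in(1/\alpha,\infty)$, the last display is exactly the hypothesis of Theorem~\ref{L1-CQ-gronwall} (valid for both the L1 scheme and the backward Euler CQ) with $\kappa=c$ and $\sigma=c\tau^\alpha$; moreover, since the index on the right-hand side is $m-1$, the Remark following Theorem~\ref{L1-CQ-gronwall} applies, so no restriction on $\tau$ is needed. This gives $\max_{1\le n\le N}\|\rho_h^n\|_{L^2(\Omega)}\le c\tau^\alpha$, and combining it with $\|u_h(t_n)-w_h^n\|_{L^2(\Omega)}\le c\tau^\alpha$ via the triangle inequality yields \eqref{error-estimate}. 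I expect the main obstacle to be the second step: squeezing a \emph{uniform-in-$n$} $O(\tau^\alpha)$ bound for the fully discrete linear error out of Lemma~\ref{lem:time-error-linear} in spite of the singular behaviour $\|u_h'(s)\|_{L^2(\Omega)}\sim s^{\alpha-1}$ of the semidiscrete solution near $t=0$; once that is in place, the interplay between discrete maximal regularity and the discrete fractional Grönwall inequality is routine.
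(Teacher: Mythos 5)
Your proof is correct and follows essentially the same route as the paper: the same decomposition into a fully discrete linear auxiliary problem (your $w_h^n$, the paper's $v_h^n$) plus a remainder $\rho_h^n$, the same application of Lemma~\ref{lem:time-error-linear} combined with the regularity estimates \eqref{reg-FEM}--\eqref{reg-FEM2} from Theorem~\ref{THM:Reg}, and the same interplay of discrete maximal $\ell^p$-regularity with the discrete fractional Gr\"onwall inequality of Theorem~\ref{L1-CQ-gronwall}. Your only additions are cosmetic but sound: the identity $\Delta_hR_hu_0=P_h\Delta u_0$ and the observation that, because the right-hand side index is $m-1$, the Remark following Theorem~\ref{L1-CQ-gronwall} removes the step-size restriction.
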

\begin{proof}
For given $u_h^0, \cdots, u_h^{n-1}$, \eqref{TD-scheme}
is essentially a linear system with a symmetric positive definite matrix,
and thus it has a unique solution $u_h^n\in S_h$.
It suffices to establish the estimate \eqref{error-estimate}.
Like before, we decompose the fully discrete solution $u_h^n$ into two parts,
$u_h^n=v_h^n+\rho_h^n$, where $v_h^n$ and $\rho_h^n$ respectively satisfy
\begin{align}
& \bar\partial_\tau^\alpha( v_h^n-v_h^0)
 -\Delta_h v_h^n=P_hf(u_h(t_n)) , \label{FEM-vhn} \\
&\bar\partial_\tau^\alpha \rho_h^n -\Delta_h \rho_h^n
=P_hf(u_h^{n-1})-P_hf(u_h(t_n)), \label{PDE-vn}
\end{align}
with $v_h^0=u_h(0)=R_h u_0$  and $\rho_h^0=0$.
Equation \eqref{FEM-vhn} can be viewed as the time discretization of \eqref{nonlinear-FEM},
with the right-hand side being a given function. Hence,
by Lemma \ref{lem:time-error-linear} and using $\|\partial_su_h(s)\|_{L^2(\Omega)}\le cs^{\alpha-1}$
(cf. Theorem \ref{THM:Reg}) and Rademacher's theorem, we have
\begin{align}\label{estimate-u-w}
\begin{aligned}
\|u_h(t_n)-v_h^n\|_{L^2(\Omega)}
\le& ct_n^{\alpha-1}\tau\bigg(\|\Delta_h u_h(0)\|_{L^2(\Omega)}+\|f(u_h(0))\|_{L^2(\Omega)}\bigg)\\
   &+c\tau\int_0^{t_n}(t_n-s)^{\alpha-1}\|f'(u_h(s))\partial_su_h(s)\|_{L^2(\Omega)}\d s \\
\le & ct_n^{\alpha-1}\tau +c\tau\int_0^{t_n}(t_n-s)^{\alpha-1} s^{\alpha-1} \d s  \\
\le & ct_n^{\alpha-1}\tau +ct_n^{2\alpha-1}\tau
\le  c\tau^\alpha .
\end{aligned}
\end{align}
It remains to estimate $\rho_h^n$. By applying the discrete maximal $\ell^p$-regularity
to \eqref{PDE-vn} (choosing $X=L_h^2(\Omega)$ in \cite[Theorems 3.1 and 4.1]{JLZ}),
we obtain that for all $1<p<\infty$:
\begin{equation*}
  \begin{aligned}
    \|(\bar\partial_\tau^\alpha \rho_h^n)_{n=1}^m\|_{\ell^p(L^2(\Omega))}
&\le c\|(f(u_h^{n-1})-f(u_h(t_n)))_{n=1}^m\|_{\ell^p(L^2(\Omega))} \\
&\le c\|(f(u_h^{n-1})-f(u_h(t_{n-1})))_{n=1}^m\|_{\ell^p(L^2(\Omega))}\\
&\quad +c\|(f(u_h(t_{n-1}))-f(u_h(t_n)))_{n=1}^m\|_{\ell^p(L^2(\Omega))}.
  \end{aligned}
\end{equation*}
By the Lipschitz continuity of $f$ and the triangle inequality, we arrive at
\begin{equation*}
  \begin{aligned}
   &\quad\|(f(u_h^{n-1})-f(u_h(t_{n-1})))_{n=1}^m\|_{\ell^p(L^2(\Omega))}\\
    &\leq c\|(u_h(t_{n-1})-u_h^{n-1})_{n=1}^m\|_{\ell^p(L^2(\Omega))} \\
   & \leq  c\|(u_h(t_{n-1})-v_h^{n-1})_{n=1}^m\|_{\ell^p(L^2(\Omega))} +c\|( \rho_h^{n-1})_{n=1}^m\|_{\ell^p(L^2(\Omega))} \\
  & \le c\tau^\alpha +c\|(\rho_h^{n})_{n=1}^{m-1}\|_{\ell^p(L^2(\Omega))} ,
  \end{aligned}
\end{equation*}
where the last inequality follows from \eqref{estimate-u-w}. Similarly, by the Lipschitz
continuity of $f$ and the a priori estimate $\|u_h\|_{C^\alpha([0,T];L^2(\Omega))}\le c$ (cf. Theorem \ref{THM:Reg}), we deduce
\begin{equation*}
  \begin{aligned}
   \quad \|(\|f(u_h(t_{n-1}))-f(u_h(t_n))\|_{L^2(\Omega)})_{n=1}^m\|_{\ell^p}
   &\leq c\|(\|u_h(t_{n-1})-u_h(t_n)\|_{L^2(\Omega)})_{n=1}^m\|_{\ell^p} \\
   &\leq c\|(c\tau^\alpha)_{n=1}^m\|_{\ell^p}.
  \end{aligned}
\end{equation*}
Combining the preceding three estimates yields
\begin{equation*}
    \|(\bar\partial_\tau^\alpha \rho_h^n)_{n=1}^m\|_{\ell^p(L^2(\Omega))} \le c\|(\rho_h^{n})_{n=1}^{m-1}\|_{\ell^p(L^2(\Omega))}
+c\tau^\alpha .
\end{equation*}
By choosing $p>1/\alpha$ and applying the discrete Gr\"onwall's inequality (with
$X=L^2(\Omega)$ in Theorem \ref{L1-CQ-gronwall}), we obtain
\begin{align}\label{estimate-v}
\max_{1\le n\le N}\|\rho_h^n\|_{L^2(\Omega)}\le c\tau^\alpha .
\end{align}
In view of the decomposition $u_h(t_n)-u_h^n=(u_h(t_n)-v_h^n) - \rho_h^n$,
the two estimates \eqref{estimate-u-w} and \eqref{estimate-v} imply \eqref{error-estimate},
completing the proof of the theorem.
\end{proof}

\begin{remark}\label{Error-non-Lipschitz}
{\upshape
If the nonlinear source $f$ is not Lipschitz continuous but problem \eqref{nonlinear-PDE} has
a unique bounded solution $u$, then Theorems \ref{THM:Error-1} and \ref{THM:Error-2} are still
valid by proving the boundedness of the semidiscrete solution $u_h$ and the fully discrete solution $u_h^n$.
For simplicity, we have assumed $f$ to be Lipschitz continuous in order to avoid these technicalities.
}\end{remark}

\section{Proof of Lemma \ref{lem:time-error-linear} for the L1 scheme}\label{app:time-error}
The L1 scheme was analyzed in \cite{JinLazarovZhou:2016ima} only for the homogeneous problem.
Below we give a proof for the general case.

First, we assume that $g$ is time-independent,
i.e., $g(t) \equiv g(0)$. Then using Laplace transform, one can derive the following
error representation (cf. \cite[eq. (2.7) and (2.9)]{JinLazarovZhou:2016ima}):
\begin{equation*}
\begin{aligned}
v_h(t_n)-v_h^n
  &= \frac{1}{2\pi\mathrm{i}}\int_{\Gamma_{\theta,\delta}}e^{zt_n}z^{-1}(z^\alpha - \Delta_h)^{-1}(\Delta_hv_h(0)+P_hg(0)) \d z \\
  &\quad
  - \frac{1}{2\pi\mathrm{i} }\int_{\Gamma_{\theta,\delta}^\tau }e^{zt_{n}}\mu(e^{-z\tau})^{-1} ( \beta_\tau(e^{-z\tau}) - \Delta_h)^{-1}(\Delta_hv_h(0)+P_hg(0))  \,\d z ,
  \end{aligned}
\end{equation*}
where the contour $\Gamma_{\theta,\delta}$ is defined in \eqref{contour-Gamma}, $\Gamma_{\theta,\delta}^\tau
=\{z\in \Gamma_{\theta,\delta}:|{\rm Im}(z)|\le 1/\tau\}$, and
\begin{equation*}
    \mu(z)=\frac{1-e^{-z\tau}}{\tau e^{-z\tau}} \quad \text{and}\quad
    \beta_\tau(e^{-z\tau}) = \frac{(1-e^{-z\tau})^2}{e^{-z\tau}\tau^\al\Gamma(2-\al)} \mathrm{Li}_{\alpha-1}(e^{-z\tau}),
\end{equation*}
which satisfy the following estimates (cf. \cite[Section 3]{JinLazarovZhou:2016ima}):
\begin{align}
    &  c_0|z| \leq  |\mu(e^{-z\tau})| \leq c_1 |z| \quad \mbox{and}\quad
    |\mu(e^{-z\tau})-z|\leq c\tau |z|^2, \quad \forall z\in \Gamma_{\theta,\delta}^\tau , \label{eqn:est-g-2}\\
   & |\beta_\tau(e^{-z\tau})| \ge c|z|\tau^{1-\al} \quad\mbox{and}\quad
        |\beta_\tau(e^{-z\tau})-z^\alpha| \le c|z|^2\tau^{2-\al},\quad \forall z\in \Gamma_{\theta,\delta}^\tau.\label{eqn:est-g-4}
\end{align}
By using \eqref{eqn:est-g-2}--\eqref{eqn:est-g-4}, direct calculations yield
\begin{equation}\label{eqn:es0}
\| z^{-1}(z^\alpha - \Delta_h)^{-1} - \mu(e^{-z\tau})^{-1}(\beta_\tau(e^{-z\tau}) - \Delta_h)^{-1} \|_{L^2\II\rightarrow L^2\II} \le c |z|^{-\al} \tau.
\end{equation}
Now we split the error $v_h(t_n)-v_h^n$ into two components,  i.e., $v_h(t_n)-v_h^n=\mathcal{I}_1+\mathcal{I}_2$, where
\begin{align*}
\mathcal{I}_1 & =  \frac{1}{2\pi\mathrm{i}} \int_{\Gamma_{\theta,\delta}^\tau } e^{zt_n}
  \left(  z^{-1}(z^\alpha - \Delta_h)^{-1}-\mu(e^{-z\tau})^{-1} (\beta_\tau(e^{-z\tau}) - \Delta_h)^{-1}\right) (\Delta_hv_h(0)+P_hg(0)) \,\d z,\\
\mathcal{I}_2 &=\frac{1}{2\pi\mathrm{i}} \int_{\Gamma_{\theta,\delta}\backslash\Gamma_{\theta,\delta}^\tau } e^{zt_n} z^{-1}(z^\alpha - \Delta_h)^{-1}(\Delta_hv_h(0)+P_hg(0)) \d z.
\end{align*}
By using \eqref{eqn:es0} and \eqref{eqn:resol}, and choosing $\delta \le 1/t_n$,
the argument from \cite{JinLazarovZhou:2016ima} yields
\begin{align}\label{vhtn-vhn}
   & \| \mathcal{I}_1 \|_{L^2(\Omega)}
   +\| \mathcal{I}_2 \|_{L^2(\Omega)} \le ct_n^{ \al-1}\tau \| \Delta_hv_h(0)+P_hg(0)\|_{L^2(\Omega)}  .
\end{align}

Second, we consider the case $v(0)= g(0)=0$. Then Taylor's expansion gives
\begin{equation}\label{eqn:f}
    P_hg(t) =P_hg(0) + 1*P_hg'(t) =  1*P_hg'(t) .
\end{equation}
In view of \eqref{Int-Eq-w}, the semidiscrete solution $v_h(t_n)$ can be represented by
\begin{align}\label{repr-vhtn}
  v_h(t_n) &= (E*P_hg)(t_n)=(E*(1*P_hg'))(t_n)  =((E*1)*P_hg')(t_n) .
\end{align}
Similarly, we have
$$
({\beta_\tau(\xi)}-\Delta_h)^{-1} = \sum_{n=0}^\infty E_\tau^n \xi^n\quad\mbox{with}\quad
E_\tau^n = \frac{\tau}{2\pi\mathrm{i}}\int_{\Gamma_{\theta,\delta}^\tau } e^{zn\tau} ({ \beta_\tau(e^{-z\tau})}-\Delta_h)^{-1}\,\d z .
$$
Hence the fully discrete solution $v_h^n$ can be represented
by $v_h^n = \sum_{j=0}^n  E_\tau^{n-j}P_hg(t_j),$
and the second inequality of \eqref{eqn:est-g-4} implies
\begin{equation}\label{eqn:Etaun}
    \|  E_\tau^n  \|_{L^2(\Omega)\rightarrow L^2(\Omega)} \le ct_n^{\alpha-1}\tau   .
\end{equation}
Let $E_{\tau,\epsilon}(t) = \sum_{ n=0}^\infty E_\tau^{n}\delta_{t_n-\epsilon}(t) $,
where $\delta_{t_n-\epsilon}$ is the Dirac--Delta function concentrated at $t_n-\epsilon$, with $\epsilon\in(0,\tau)$.
Then $ v_h^n$ can be rewritten as
\begin{equation}\label{repr-vhn}
  v_h^n =\lim_{\epsilon\rightarrow 0} (E_{\tau,\epsilon} * P_hg) (t_n)
  =\lim_{\epsilon\rightarrow 0} (E_{\tau,\epsilon}*(1*P_hg'))(t_n)
  = (\lim_{\epsilon\rightarrow 0}(E_{\tau,\epsilon}*1)*P_hg')(t_n).
\end{equation}
The representations \eqref{repr-vhtn} and \eqref{repr-vhn} yield
\begin{equation}\label{vh-vdtg}
 \| v_h(t_n)-v_h^n \|_{L^2(\Omega)}
 \le \|[\lim_{\epsilon\rightarrow 0}((E-E_{\tau,\epsilon})*1)*P_hg'](t_n)\|_{L^2(\Omega)} .
\end{equation}
Using Laplace transform and Cauchy's integral formula, we deduce
\begin{align*}
    (\lim_{\epsilon\rightarrow 0} (E-E_{\tau,\epsilon}) *1) (t_n)
    &= \frac{1}{2\pi{\rm i}} \int_{\Gamma_{\theta,\delta}}e^{zt_n}z^{-1}(z^\alpha-\Delta_h)^{-1}\,\d z\\
     &\quad -\frac{1}{2\pi\mathrm{i}}\int_{\Gamma_{\theta,\delta}^\tau }  e^{zt_{n}}\mu(e^{-z\tau})^{-1} ( \beta_\tau(e^{-z\tau}) - \Delta_h)^{-1}\d z.
\end{align*}
Then using the estimate \eqref{eqn:es0} we obtain
\begin{equation}\label{E-Etautn}
\| (\lim_{\epsilon\rightarrow 0}(E-E_{\tau,\epsilon})*1)(t_n)  \|_{L^2(\Omega)\rightarrow L^2(\Omega)} \le c  \tau t_n^{\alpha-1} .
\end{equation}
It remains to prove the following extension of the estimate \eqref{E-Etautn}:
\begin{equation}\label{eqn:tn}
 \| (\lim_{\epsilon\rightarrow 0}(E-E_{\tau,\epsilon})*1)(t) \|_{L^2(\Omega)\rightarrow L^2(\Omega)} \le c\tau t^{\alpha-1},\quad
 \forall\, t\in(0,T) .
\end{equation}
Then this and \eqref{vh-vdtg} yield
the second part on the right-hand side of \eqref{lem:time-error-linear},
and completes the proof of Lemma \ref{lem:time-error-linear}.

To prove \eqref{eqn:tn}, we consider the Taylor expansion of $(E(t)-E_{\tau,\epsilon}(t))\ast 1$ at $t=t_n$, i.e.,
\begin{equation}\label{E-EtauTaylor}
   ( (E-E_{\tau,\epsilon})*1)(t) = ((E-E_{\tau,\epsilon})*1)(t_n)
    - \int_t^{t_n} (E-E_{\tau,\epsilon})(s)\,\d s .
\end{equation}
In view of Lemma \ref{lem:smoothing} (iii), there holds
\begin{equation*}
    \bigg\| \int_t^{t_n} E(s)\,\d s \bigg \|_{L^2(\Omega)\rightarrow L^2(\Omega)}
    \le c \int_t^{t_n} s^{\al-1} \,\d s \le c\tau t^{\alpha-1} .
\end{equation*}
Similarly, appealing to \eqref{eqn:Etaun}, we have
\begin{equation*}
    \bigg\|\lim_{\epsilon\rightarrow 0}\int_t^{t_n} E_{\tau,\epsilon}(s)\,\d s \bigg\|_{L^2(\Omega)\rightarrow L^2(\Omega)}
    =\|E_{\tau}^n\|_{L^2(\Omega)\rightarrow L^2(\Omega)}    \le ct_n^{\alpha-1}\tau .
\end{equation*}
Substituting \eqref{E-Etautn}  and the last two inequalities into \eqref{E-EtauTaylor} yields \eqref{eqn:tn}.
\endproof

\section{Numerical experiments}\label{sec:numerics}
In this section, we present numerical examples to verify the theoretical
results in Theorems \ref{THM:Error-1} and \ref{THM:Error-2}.
We consider problem \eqref{nonlinear-PDE} with a
diffusion coefficient $0.1$ in the unit square $\Omega=(0,1)^2$, with the following two sets of problem data:
\begin{itemize}
\item[(a)] $u_0(x,y)=xy(1-x)(1-y)\in H^1_0(\Omega)\cap H^2(\Omega)$ and $f=\sqrt{1+u^2}$;
\item[(b)] $u_0(x,y)=x(1-x)\sin(2\pi y)\in H^1_0(\Omega)\cap H^2(\Omega)$ and $f=1-u^3$.
\end{itemize}

In the computation, we divided the domain $\Omega$ into regular right triangles with $M$ equal subintervals of length $h=1/M$
on each side of the domain. The numerical solutions are computed by using the Galerkin FEM in space,
and the backward Euler (BE) CQ or the L1 scheme in time.  To evaluate the convergence, we compute
the spatial error $e_t$ and temporal error $e_s$, respectively, defined by
\begin{equation*}
 e_s = \max_{1\leq n\leq N}\|u_h(t_n)-u(t_n)\|_{L^2(\Omega)}
 \quad \mbox{and}\quad
  e_t = \max_{1\le n\le N}\|u_h^n-u_h(t_n)\|_{L^2(\Omega)}.
\end{equation*}
Since the exact solution to problem \eqref{nonlinear-PDE} is unavailable, we compute reference
solutions on a finer mesh, i.e., the continuous solution $u(t_n)$ with a fixed time step $\tau=1/1000$
and mesh size $h=1/1280$, and the semidiscrete solution $u_h(t_n)$ with $h=1/10$ and
$\tau=1/(64\times10^4)$.

In case (a), since the nonlinearity $f$ is Lipschitz continuous, the theory in
Section \ref{sec:error} applies. The numerical results
for case (a) are shown in Tables \ref{tab:a-space} and \ref{tab:a-time}, where
the numbers in the bracket in the last column refer to the theoretical predictions
from Section \ref{sec:error}. We observe an $O(h^2)$ rate for
the spatial error $e_s$, and an $O(\tau^\alpha)$ rate for the temporal
error $e_t$ for both backward Euler CQ and L1 scheme. These observations fully confirm
Theorems \ref{THM:Error-1} and \ref{THM:Error-2}.

\begin{table}[htb!]
\caption{Numerical results for case (a): the spatial error $ e_s$
with $T=1$, with  $N=1000$, $h=1/M$.}\label{tab:a-space}
\begin{center}
\vspace{-.3cm}{\setlength{\tabcolsep}{7pt}
     \begin{tabular}{|c|ccccc|c|}
     \hline
      $\alpha\backslash M$    &$5 $ &$10 $ & $20 $ & $40 $ &$80 $& rate \\
     \hline
       $0.4$       &6.89e-2 &2.00e-2 &5.34e-3 &1.37e-3 &3.31e-4  &  $\approx$ 2.01 (2.00)\\
       $0.6$       &7.06e-2 &2.05e-2 &5.58e-3 &1.42e-3 &3.44e-4  &  $\approx$ 2.01 (2.00)\\
       $0.8$       &7.59e-2 &2.18e-2 &5.80e-3 &1.48e-3 &3.57e-4  &  $\approx$ 2.01 (2.00)\\
      \hline
     \end{tabular}}
\end{center}
\end{table}

\begin{table}[htb!]
\caption{Numerical results for case (a): the temporal error $e_t$
with $T= 1$,  $\tau=T/N$,  $N=k\times10^4$, and $h=0.1$.}\label{tab:a-time}
\begin{center}
\vspace{-.3cm}{\setlength{\tabcolsep}{7pt}
     \begin{tabular}{|c|c|ccccc|c|}
     \hline
      $\alpha$ &  $k$    &$1 $ &$2 $ & $4 $ & $8 $ &$16$  &rate \\
     \hline
         $0.4$          & BE  &1.16e-3 &8.88e-4 &6.79e-4 &5.19e-4 &3.86e-4 & $\approx$ 0.39 (0.40)\\
                         &  L1  &2.06e-3 &1.59e-3 &1.22e-3 &9.34e-4 &7.15e-4 &  $\approx$ 0.38 (0.40)\\
      \hline
         $0.6$          & BE   &1.79e-4 &1.18e-4  &7.75e-5  &5.10e-5 &3.36e-5 & $\approx$ 0.60 (0.60)\\
                        &  L1   &3.05e-4 &2.02e-4 &1.33e-4 &8.80e-5  &5.81e-5  & $\approx$ 0.60 (0.60)\\
      \hline
        $0.8$           & BE    &1.73e-5  &9.87e-6 &5.65e-6 &3.24e-6 &1.86e-6 &  $\approx$ 0.80 (0.80)\\
                         & L1    &3.91e-5 &2.24e-5 &1.29e-5 &7.38e-6 &4.24e-6 &  $\approx$ 0.80 (0.80)\\
      \hline
     \end{tabular}}
\end{center}
\end{table}

In case (b), the nonlinear source $f$ is not Lipschitz continuous. Nonetheless, one observes
an $O(h^2)$ and $O(\tau^\alpha)$ convergence rate for the spatial and temporal errors, respectively,
cf. Tables \ref{tab:b-space} and \ref{tab:b-time}. This concurs with the discussions in
Remarks \ref{Reg-non-Lipschitz} and \ref{Error-non-Lipschitz}. Further, the absolute accuracy
of the L1 scheme and backward Euler CQ is comparable with each other for both cases (a) and (b).
Interestingly, the spatial error $e_s$ increases slightly with the fractional order $\alpha$,
but the temporal error $e_t$ decreases with $\alpha$.

\begin{table}[htb!]
\caption{Numerical results for case (b): the spatial error $e_s$
with $T=1$, with  $N=1000$, $h=1/M$.}\label{tab:b-space}
\begin{center}
\vspace{-.3cm}{\setlength{\tabcolsep}{7pt}
     \begin{tabular}{|c|ccccc|c|}
     \hline
      $\alpha \backslash M$    &$5 $ &$10 $ & $20 $ & $40 $ &$80 $& rate \\
     \hline
       $0.4$       &5.65e-2 &1.68e-2 &4.58e-3 &1.18e-3 &2.87e-4  &  $\approx$ 2.00 (2.00)\\
       $0.6$       &5.90e-2 &1.75e-2 &4.74e-3 &1.22e-3 &2.97e-4  &  $\approx$ 2.00 (2.00)\\
       $0.8$       &6.19e-2 &1.82e-2 &4.93e-3 &1.27e-3 &3.08e-4  &  $\approx$ 2.01 (2.00)\\
      \hline
     \end{tabular}}
\end{center}
\end{table}

\begin{table}[htb!]
\caption{Numerical results for case (b): the temporal error $e_t$
with $T=1$,  $\tau=T/N$, $N=k\times10^4$, $h=0.1$.}\label{tab:b-time}
\begin{center}
\vspace{-.3cm}{\setlength{\tabcolsep}{7pt}
     \begin{tabular}{|c|c|ccccc|c|}
     \hline
      $\alpha$ &  $k$    &$1 $ &$2 $ & $4 $ & $8 $ &$16 $  &rate \\
     \hline
         $0.4$          & BE  &1.53e-3 &1.17e-3 &9.07e-4 &6.96e-4 &5.33e-4  & $\approx$ 0.38 (0.40)\\
                         &  L1  &2.73e-3 &2.12e-3 &1.64e-3 &1.26e-3 &9.65e-4  &  $\approx$ 0.38 (0.40)\\
      \hline
         $0.6$          & BE   &2.43e-4 &1.60e-4 &1.05e-4 &6.93e-5  &4.56e-5  & $\approx$ 0.60 (0.60)\\
                        &  L1   &4.14e-4 &2.74e-4 &1.81e-4 &1.20e-4  &7.89e-5  & $\approx$ 0.60 (0.60)\\
      \hline
        $0.8$           & BE    &2,35e-5 &1.34e-5 &7.68e-6 &4.40e-6 &2.53e-6   &  $\approx$ 0.80 (0.80)\\
                         & L1    &5.30e-5 &3.04e-5 &1.75e-5 &1.00e-5 &5.76e-6  &  $\approx$ 0.80 (0.80)\\
      \hline
     \end{tabular}}
\end{center}
\end{table}

\section*{Acknowledgements}
The authors are grateful to the anonymous referees for their constructive comments, which are very helpful to improve the presentation of the paper.

\bibliographystyle{siam}
\bibliography{frac_nonlinear}
\end{document}